\documentclass[10pt,reqno]{amsart}
\usepackage{amssymb,amsmath,bm,geometry,graphics,url,color}
\usepackage{amsfonts}
\usepackage{graphicx}
\usepackage{epsfig,multirow}
\usepackage{amscd}
\usepackage{enumerate}
\usepackage{mathrsfs}
\usepackage{xypic}
\usepackage{stmaryrd}
\usepackage{fancybox}
\usepackage{exscale,array}
\usepackage{enumitem}%
\usepackage{color,cases}
\usepackage{hyperref}
\usepackage{algorithm}
\usepackage{algorithmic}
\usepackage{verbatim}
\usepackage{amsmath}
\usepackage{supertabular}
\usepackage{array}
\usepackage{multirow}
\usepackage{booktabs}
\usepackage{threeparttable}

\def\pdt2{\partial_t^2}
\def\pdx2{\partial_x^2}

\newcommand{\bB}{{\bf B}}
\newcommand{\bA}{{\bf A}}

\newcommand{\bS}{\boldsymbol{\sigma}}
\newcommand{\brho}{\boldsymbol{\rho}}

\def\eps{\varepsilon}

\renewcommand{\(}{\left(}
\renewcommand{\)}{\right)}

\newcommand{\bv}{{\bf v}}

\newcommand{\bw}{{\bf w}}

\newcommand{\HH}{{\bf H}}
\newcommand{\EE}{{\bf E}}

\newcommand{\CU}{{\bf curl}}

\newcommand{\BB}{{\bf B}}
\newcommand{\ZZ}{{\bf Z}}

\newcommand{\bx}{\mathbf{x}}

\newcommand{\br}{{\mathbf r}}

\newcommand{\bJ}{\mathbf{J}}

\newtheorem{theo}{Theorem}[section]
\newtheorem{lem}[theo]{Lemma}
\newtheorem{cor}[theo]{Corollary}
\newtheorem{rem}[theo]{Remark}

\newtheorem{prop}[theo]{Proposition}
\newtheorem{ex}[theo]{Example}
\numberwithin{equation}{section}

\title[Closed form solution of Maxwell's equations]{On analytic solution of the Maxwell's equation with non-zero currents}

\author[X.R. Zou]{Xiaorong Zou}\address{\hspace*{-12pt}X.R.~Zou: Global Market Risk Analytic, Bank of America, NYC, NY, USA}
\email{xiaorong.zou@bofa.com}

\begin{document}
\maketitle
\dedicatory{}
\begin{abstract}
An analytic solution has been recently developed for the Maxwell's equation in a medium with zero currents such as vacuum.  The solution is attractive in the sense that it is formulated based on the Fourier expansion of the initial value.  It has been used to study the properties of solutions like certain conservative laws and construct electromagnetic waves with certain features. 

In this paper,  we study Maxwell's equation in a medium with non-zero currents.  The structure of solutions in this setting turns out to be much more complicated than what has been achieved without currents, and a clean structure of analytic solutions as with zero current is no longer available in general. Nevertheless, we can still develop an algorithm to construct the solution effectively.  Our efforts in seeking analytic solution focus on two special cases.  First, we develop analytic solution under the assumption that Ohm's law is satisfied, i.e. the current density is proportional to electronic density;  secondly, we add skew symmetric components under generalized Ohm's law, which is also refereed as Hall effect in literature,  and study the properties of solutions. In addition, we consider the case where an independent local electromagnetic field is included and derive the analytical solution accordingly. As an application, we provide an example to use the analytic solution to construct parallel electronic and magnetic waves. 
\end{abstract}
 \section{Introduction}
The Maxwell's equations form the foundation of classical electromagnetism and play a critical role in a wide variety of applications in science and engineering.   In this paper,  we aim to develop an attractive analytic solution of the Maxwell's equations
 \begin{subequations}\label{maxsys}
\begin{numcases}%
\,\frac{\partial \HH}{\partial t} =-\frac{1}{\mu}\CU\  \EE,\ \ \ \frac{\partial \EE}{\partial t} =\frac{1}{\eps}\CU\  \HH - \bJ,\label{maxsys a}\\
\HH(\bx,0) = H_0(\bx) ,\qquad  \EE(\bx,0) =E_0(\bx), \label{maxsys b}
\end{numcases}
\end{subequations}
where $\bx = (x_1,x_2,x_3)$, $\textbf{H}= (H_1 ,H_2 ,H_3 )^{\intercal}: \mathbb{R}^3\times
\mathbb{R}_+ \rightarrow \mathbb{R}^3$ {stands for}
magnetic field intensity,
$\textbf{E}= (E_1 ,E_2 ,E_3 )^{\intercal}: \mathbb{R}^3\times
\mathbb{R}_+ \rightarrow \mathbb{R}^3$ represents { electric field intensity}, $H_0(\bx), E_0(\bx)$ represents the initial condition of the system, $\bJ$ denote the current density,  $\mu$ and $\eps$ are constant in this paper and characterize the magnetic permeability and electric permittivity, respectively.  It is well known that Maxwell's equations \eqref{maxsys} have unique solutions for all time if the initial values \eqref{maxsys b} are suitably smooth \cite{WW21},\cite{Leis86}. 

There are rich researches on numerical solutions of Maxwell's equations over the last couple of decades due to the importance and diversity of applications \cite{3}-\cite{WJ}. An attractive analytic solution has recently developed in \cite{zou_maxwell} when $\bJ=0$.  Briefly, the Maxwell equation is transformed into the following Abstract Cauchy Problem (ACP) \cite{engel_nagel} of the complex vector field $Z=\EE+ i \HH$ 
\begin{equation}\label{eq:ACP_simple}
\frac{\partial Z(\bx, t)}{\partial t}  = i \cdot \CU  Z(\bx, t),  \quad Z(\bx, 0) = Z_0(\bx): = E_0(\bx) + i H_0(\bx)
\end{equation}
Express $Z_0$ by its Fourier series
\begin{equation}\label{def:z_0}
Z_0(\bx) = \sum_{\bw} a_{\bw} e^{i\bw \cdot \bx} =: \sum_{\bw} Z_{0,\bw}(\bx),
\end{equation}
where $\bw\in R^3$ runs through index set decided by periodicity of $Z_0(\bx)$ and is named a wave vector throughout the paper. Based on the principle of superposition the solution can be decomposed into the summation 
\[
Z(\bx, t) = \sum_{\bw} Z_{\bw}(\bx, t)
\]
where $Z_{\bw}(\bx, t)$ is the solution with $Z_{\bw}(\bx, 0)=Z_{0,\bw}(\bx)$ for a given wave vector $\bw$. It turns out that $Z_{\bw}(\bx, t)$ can be represented as a linear combination of certain base solutions that are associated to eigenvectors of the underlying linear operator $i\cdot \CU$ of ACP.  Hence the solution can be constructed in term of coefficients $\{a_{\bw}\}$ of Fourier series (\ref{def:z_0}) of $Z_0(\bx)$.

In this paper,  we aim to find an analytic solution of Maxwell Equation with non-zero current density $\bJ$ that has following format 
\begin{equation} \label{def:Sigma}
	\bJ  = \bS \EE + J^g(\bx)
\end{equation}
where $ J^g(\bx)$ is an independent local electromagnetic field \cite{bossavit} (page 11) and $\bS$ is the conductivity matrix \cite{buschow}-\cite{Rysti}
\begin{equation}\label{def:bs}
\bS= \left(
\begin{array}{ccc}
	\sigma_{11} & \sigma_{12} & \sigma_{13}\\
	\sigma_{21} & \sigma_{22} & \sigma_{23} \\
	\sigma_{31} & \sigma_{32} & \sigma_{33} \\
\end{array}
\right).
\end{equation}
Note that Eq. (\ref{def:Sigma}) is reduced to the generalized Ohm's Law when $J^g(\bx)$ is dropped out.  

One can follow the same steps as used in the case $\bJ=0$:  transform the equation \ref{maxsys} to an ACP with a underlying linear operator $\bB$, find the base solutions by restricting ACP to a subspace associated to a given wave vector $\bw$, and finally construct the desired solution by the linear combinations of base solutions. 
However, the operator $\bB$ becomes much more complicated since $\EE$ and $\HH$ is not as symmetric as in the case $\bJ=0$.  For example, $\bB$ need to act on a $6$ dim complex vector field whose real part is $(\EE,\HH)$, compared to $3$ dim complex vector space used in the treatment of $\bJ=0$.

We start with the case $J^g(\bx)\equiv 0$ and develop a framework to construct solutions in a general setting of $\bJ$ by Eq. (\ref{def:bs}).  The main result is summarized in Theorem \ref{thm-main} that shows a similar structure of a solution as in the case $\bJ=0$: it is a linear combination of the base solutions with $Z^k_{\bw}e^{i\lambda_{k,\bw}t}$ where $\{\lambda_{k,\bw}\}_{k=1}^6$ is the set of eigenvalues of $\bB_{\bw}$,  the linear operator by restricting $\bB$ to the base space associated to $\bw$, and $\{ Z^k_{\bw}\}_{k=1}^6$ is a set of associated eigenvectors that generates the subspace.  The general result is refined by taking the following conductivity matrix
\begin{equation}\label{def:bs-skew}
	\bS= \left(
	\begin{array}{ccc}
		\sigma & c & -b\\
		-c & \sigma & a \\
		b & -a & \sigma \\
	\end{array}
	\right).
\end{equation}
where $\sigma\ge0$ and $a,b,c$ are three real numbers.  We derive the explicit solutions under Ohm's assumption with $a=b=c=0$ as shown in Theorem \ref{thm-2}. For a general skew parameter $p=(a,b,c)$, we focus on the properties of eigenvalues of $\bB$, which play the key rule how electromagnetic waves propagate.  For example, we prove that eigenvalues are all pure imaginary, associating to the case without energy loss, if and only if $\sigma=0$ (Proposition \ref{prop:real_solution}).  We discuss two special cases where $p$ is either parallel or perpendicular to the wave vector $\bw$ and close-form of eigenvectors are available.

Finally, we consider the general setting \ref{def:Sigma} of $J^g$.  It turns out that the desired analytic solution can be derived from the solution without $J^g(\bx)$ by some adjustment based on the initial conditions.  The result is shown in Theorem \ref{theorem-main-3}.

The rest of the paper is organized as follows. Section \ref{sec:j=general} is used to develop the framework. We start with reviewing certain results about eigenvalues and eigenvectors of $\CU$ operator developed in \cite{zou_maxwell}, which plays a critical tool for the analysis and results developed in this paper.  We then convert Maxwell Equation \ref{maxsys} into an equivalent CAP \ref{eq:ACP_simple} and further study the property of $\bB_{\bw}$, and derive a solution under a general setting as a framework.  Section \ref{sec:j=sigmmaE} is used to develop the desired analytic solution under Ohm's assumption, i.e. $\bJ = \sigma I_3$. It consists three subsections:  Subsection \ref{subsection-general} covers the solution under the subspace associated to a wave vector $\bw$ such that $\sigma\neq 2|\bw|$. Subsection \ref{subsection-special} is devoted to cope with $\sigma= 2|\bw|$ and desired solution is constructed by applying limit of the solution with $\sigma\neq 2|\bw|$ as $\sigma\to 2|\bw|$. The solution with general initial condition is then stated in Subsection \ref{subsection-all_together}. In Section \ref{sec:skew}, we consider the solution under the assumption \ref{def:bs-skew} on $\bJ$. We derive a polynomial of degree $5$ that determines non-zero eigenvalues of $\bB_{\bw}$, and study the properties of the eigenvalues.  We then focus on two special cases where close-form solution of eigenvalues are available.  Section \ref{sec:jg} is used to derive the analytic solution with the general setting for $\bJ$ defined by \ref{def:Sigma}.  Appendix is used to prove several Lemmas and propositions stated in main body.
\section{The framework of analytic solutions under a general setting}\label{sec:j=general}
Follow the notations in \cite{zou_maxwell}, let $\bw =(w_1,w_2, w_3)^T\in R^3$, named as a wave vector in this paper, $V^{\bw}=e^{i\bx \cdot \bw} C^3$ be the $3$-dim complex vector space generated by $e^{i\bx \cdot \bw}$. For any $\eta =\xi e^{i\bx \cdot \bw} \in V^{\bw}$, by definition of $\CU$, we have
\[
\CU (\eta) = i  \Phi_{\bw}\xi e^{i\bx \cdot \bw},
\]
where
\begin{equation}\label{eq:phi}
	\Phi_{\bw}=\left(
	\begin{array}{ccc}
		0 & -w_3 &w_2\\
		w_3 & 0 & -w_1 \\
		-w_2 & w_1 & 0
	\end{array}
	\right),
\end{equation}
Note that $\Phi_{\bw}$ is skew symmetric and has the eigenvalues  $\{0, i|\bw|, -i|\bw|\}$ 
with
\[
|\bw| = \sqrt{w_1^2 + w_2^2 +w_3^2}.
\]
For any vector ${\bw}=(w_1,w_2,w_3)^T\in R^3$, define ${\br}_{\bw} = (w_2-w_3, w_3-w_1, w_1-w_2)^T$ and
\[
s_{\bw} := w_1+w_2+w_3, \quad  \gamma_{\bw} := |{\br}_{\bw}|  = \sqrt{3|\bw|^2 - s_{\bw}^2}, \quad  \nu_{\bw} = \sqrt 2\frac{\gamma_{\bw} }{|\bw|}.
\]
Lemma \ref{prop:curl-1}-\ref{prop:curl-2} summarize some properties of $\CU$ established in \cite{zou_maxwell}.
\begin{lem}\label{prop:curl-1}
	\begin{enumerate}
		\item 
	\begin{equation}\label{eq:Phi_w}
		\CU (\xi e^{i\bx \cdot \bw})= i\lambda \xi e^{i\bx \cdot \bw}  \quad \mbox{if and only if} \quad	\Phi_{\bw}\xi = \lambda \xi.
	\end{equation}
	\item 
 The restriction of $\CU$ on $V^{\bw}$ has three eigenvalues  $\{\mu_{d,\bw}\}_{d=1}^3 $ with
\begin{equation}\label{eq:eigenvalue}
\mu_{1,\bw} = 0, \quad \mu_{2,\bw} = -|\bw|, \quad \mu_{3,\bw} = -\mu_{2,\bw},
\end{equation}
	\end{enumerate}
\end{lem}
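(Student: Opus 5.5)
The plan is to reduce both parts to finite-dimensional linear algebra on $V^{\bw}$, using the identity $\CU(\xi e^{i\bx\cdot\bw}) = i\Phi_{\bw}\xi\, e^{i\bx\cdot\bw}$ recorded just before the lemma. I will verify that identity componentwise first, since everything rests on it. For instance, the first component of $\CU$ is $\partial_{x_2}(\xi_3 e^{i\bx\cdot\bw}) - \partial_{x_3}(\xi_2 e^{i\bx\cdot\bw}) = i(w_2\xi_3 - w_3\xi_2)e^{i\bx\cdot\bw}$. This is exactly $i$ times the first entry of $\Phi_{\bw}\xi$ read off from \eqref{eq:phi}. The other two components follow by cyclic permutation. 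In particular, $V^{\bw}$ is invariant under $\CU$, and the map $\xi\mapsto\xi e^{i\bx\cdot\bw}$ is a linear isomorphism $C^3\to V^{\bw}$ that intertwines $\CU|_{V^{\bw}}$ with the matrix $i\Phi_{\bw}$.

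For part (1), I compare $\CU(\xi e^{i\bx\cdot\bw}) = i\Phi_{\bw}\xi\, e^{i\bx\cdot\bw}$ with $i\lambda\xi e^{i\bx\cdot\bw}$. Since $e^{i\bx\cdot\bw}\neq 0$ for every $\bx$, equality of these vector fields is equivalent to $\Phi_{\bw}\xi = \lambda\xi$. This gives both directions of \eqref{eq:Phi_w}.

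For part (2), I compute the characteristic polynomial of $\Phi_{\bw}$ directly from \eqref{eq:phi}. Expanding the $3\times 3$ determinant gives
\[
\det(\lambda I - \Phi_{\bw}) = \lambda^3 + (w_1^2+w_2^2+w_3^2)\lambda = \lambda(\lambda^2+|\bw|^2).
\]
Its roots are $0$ and $\pm i|\bw|$, matching the eigenvalues of $\Phi_{\bw}$ stated in the text. By the intertwining above, the eigenvalues of $\CU|_{V^{\bw}}$ are those of $i\Phi_{\bw}$, namely $i\cdot 0 = 0$, $i\cdot(i|\bw|) = -|\bw|$ and $i\cdot(-i|\bw|) = |\bw|$. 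Labelling these as $\mu_{1,\bw}=0$, $\mu_{2,\bw}=-|\bw|$ and $\mu_{3,\bw}=|\bw|=-\mu_{2,\bw}$ gives \eqref{eq:eigenvalue}.

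The only subtlety is a convention mismatch. Part (1) describes eigenvectors in the form $i\lambda$ with $\lambda$ an eigenvalue of $\Phi_{\bw}$, while part (2) lists the eigenvalues $\mu$ of $\CU$ itself, so $\mu = i\lambda$; I must state this relation explicitly to avoid a sign slip. I will also note the degenerate case $\bw=0$: there all three eigenvalues are $0$, and the statement still holds as a multiset. Beyond that, the proof is a routine computation, and the determinant expansion is the only step that needs care.
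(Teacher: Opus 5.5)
Your proposal is correct and takes the same route the paper relies on: the paper defers the proof to its cited reference, and the ingredients it records just before the lemma are the identity $\CU(\xi e^{i\bx\cdot\bw}) = i\Phi_{\bw}\xi\, e^{i\bx\cdot\bw}$ and the spectrum $\{0,\pm i|\bw|\}$ of the skew-symmetric matrix $\Phi_{\bw}$, both of which you verify explicitly. Your relation $\mu = i\lambda$ between the eigenvalues of $\CU|_{V^{\bw}}$ and those of $\Phi_{\bw}$ also agrees with how the paper uses it later in Eq.~\eqref{eq:Phi_w_case}.
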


\begin{lem}\label{prop:curl-2} \label{lem-eigenvalues}  Let ${\bw}=(w_1,w_2,w_3)^T\in R^3$, $|\bw|>0$ and 
		$\bv_{1,\bw} = w/|\bw|$.
	\begin{enumerate}
	\item Assume $|{\br}_{\bw}|>0$.  Let 
\[
	{\bv}_{d,\bw} = \frac{1}{\nu_{\bw}}(\frac{s_{\bw}}{|\bw|^2} w-(1,1,1)^T - i\frac{\mu_{d,\bw}}{|\bw|^2} {\br}_{\bw} )   
\]
	\item Assume $|{\br}_{\bw}|=0$. In this case, $w_1=w_2=w_3:= \omega$.  Let 	
	\[
	v_1 = \frac{1}{\sqrt 3}(1,1,1)^T,\quad {v}_{2} := \frac{1}{\sqrt{3}}(\frac{1+i\sqrt 3}2, \frac{1-i\sqrt 3}{2}, -1)^T, \quad v_3 = \bar{V}_2.	
	\]
 Define
 \[
	v_{2,\bw} = 
	\begin{cases} 
		v_2 & \text{if } \omega \ge  0 \\ 
		v_3 & \text{if }  \omega < 0 
	\end{cases}, \quad v_{3,\bw} = \bar {v}_{2,\bw}.
\]
\end{enumerate}
Define
\[
	V_{\bw} = (v_{1,\bw},v_{2,\bw},v_{3,\bw}), \quad (\xi_{1,\omega},\xi_{2,\omega},\xi_{3,\omega}) = V_{\bw} e^{i\bx \cdot \bw}
\]
 Then   
	\begin{eqnarray}
		V_{\bw} \cdot V_{\bw}^{\dagger} &=& I_3, \label{eq:normality_general}\\	
		\CU (\xi_{d,\bw}) &=&\mu_{d,\bw} \xi_{d, \bw}, \quad d=1,2,3 \label{eq:eigen_vector_general}
	\end{eqnarray}
where $ V_{\bw}^{\dagger} =  \bar V_{\bw}^{T} $ and $\{ \mu_{d,\bw}\}_1^3$ is defined by Eq (\ref{eq:eigenvalue}.
	So $\{{\xi}_{d,\bw}\}^3_{d=1}$ are three eigenvectors of $\CU$ operator associated to $\mu_{d,\bw}$ and $\{{\bv}_{d,\bw}\}^3_{d=1}$ forms a orthogonal base of $C^3$. 
\end{lem}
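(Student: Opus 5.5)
The plan is a direct verification, treating each claimed eigenvector separately. By part (1) of Lemma \ref{prop:curl-1}, $\CU(\xi e^{i\bx\cdot\bw}) = i(\Phi_{\bw}\xi)e^{i\bx\cdot\bw}$. So \eqref{eq:eigen_vector_general} is equivalent to the finite-dimensional identity
\[
\Phi_{\bw} v_{d,\bw} = -i\mu_{d,\bw} v_{d,\bw}, \qquad d=1,2,3.
\]
Note also that $\Phi_{\bw}\xi = \bw\times\xi$. For $d=1$ this is immediate, since $\bw\times\bw=0$ and $\mu_{1,\bw}=0$. For the normalization \eqref{eq:normality_general}, I will show that the columns of $V_{\bw}$ are orthonormal in the Hermitian inner product, i.e.\ $V_{\bw}^{\dagger}V_{\bw}=I_3$. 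Since $V_{\bw}$ is square, this gives $V_{\bw}V_{\bw}^{\dagger}=I_3$ as well.

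\emph{Case $|\br_{\bw}|>0$.} Write $\mathbf 1=(1,1,1)^T$ and $a := \frac{s_{\bw}}{|\bw|^2}\bw-\mathbf 1$, so that $v_{d,\bw}=\nu_{\bw}^{-1}\bigl(a - i\mu_{d,\bw}\br_{\bw}/|\bw|^2\bigr)$ for $d=2,3$. The key observations come from the cross product identities:
\begin{itemize}
\item $\bw\times\mathbf 1=\br_{\bw}$, hence $\Phi_{\bw}a=-\br_{\bw}$;
\item $\bw\times(\bw\times \mathbf 1) = s_{\bw}\bw-|\bw|^2\mathbf 1$, hence $\Phi_{\bw}\br_{\bw}=|\bw|^2 a$.
\end{itemize}
Therefore
\[
\Phi_{\bw}v_{d,\bw}=\nu_{\bw}^{-1}\bigl(-\br_{\bw}-i\mu_{d,\bw}a\bigr).
\]
On the other hand,
\[
-i\mu_{d,\bw}v_{d,\bw}=\nu_{\bw}^{-1}\bigl(-i\mu_{d,\bw}a-\mu_{d,\bw}^2\br_{\bw}/|\bw|^2\bigr).
\]
These agree exactly because $\mu_{d,\bw}^2=|\bw|^2$ for $d=2,3$.

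For orthonormality, the three real vectors $\bw$, $a$, $\br_{\bw}$ are mutually orthogonal. Indeed, $a\cdot\bw=s_{\bw}-s_{\bw}=0$, $\br_{\bw}\cdot\bw=0$ and $\br_{\bw}\cdot\mathbf 1=0$, which together give $a\cdot\br_{\bw}=0$. Their norms are
\[
|a|^2=3-s_{\bw}^2/|\bw|^2=\gamma_{\bw}^2/|\bw|^2, \qquad |\br_{\bw}|^2=\gamma_{\bw}^2 .
\]
Hence
\[
|v_{d,\bw}|^2=\nu_{\bw}^{-2}\cdot 2\gamma_{\bw}^2/|\bw|^2=1
\]
by the definition of $\nu_{\bw}$. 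Next, $v_{2,\bw}\cdot\bar v_{3,\bw}$ is proportional to
\[
|a|^2+\mu_{2,\bw}\mu_{3,\bw}\gamma_{\bw}^2/|\bw|^4=\gamma_{\bw}^2/|\bw|^2-\gamma_{\bw}^2/|\bw|^2=0 .
\]
Finally, $v_{1,\bw}\perp v_{2,\bw},v_{3,\bw}$, because each $v_{d,\bw}$ with $d\ge2$ is a complex combination of $a$ and $\br_{\bw}$, both of which are orthogonal to $\bw$.

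\emph{Case $|\br_{\bw}|=0$.} Here $\bw=\omega\mathbf 1$ with $\omega\neq0$, and $|\bw|=\sqrt3|\omega|$. The vectors are explicit, so I will compute directly that $\mathbf 1\times v_2 = -i\sqrt3\, v_2$, using $\frac{1\pm i\sqrt3}{2}=e^{\pm i\pi/3}$. Then $\Phi_{\bw}v_2=-i\sqrt3\,\omega\, v_2$. Taking conjugates gives $\Phi_{\bw}v_3=i\sqrt3\,\omega\,v_3$.

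The sign split in the definition of $v_{2,\bw}$ is exactly what makes the eigenvalue $-i\mu_{2,\bw}=i|\bw|$ come out right: when $\omega\ge0$ one takes $v_2$, and when $\omega<0$ one takes $v_3$. The assignment for $v_{3,\bw}$ then follows by conjugation. Orthonormality reduces to checking that $v_2\cdot\mathbf 1=0$, $|v_2|^2=1$ and $v_2\cdot\bar v_3=v_2\cdot v_2=0$, each of which is a routine sum of sixth roots of unity.

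The only real obstacle is bookkeeping of signs and conventions. One must remember that the $\CU$-eigenvalue $\mu$ corresponds to the $\Phi_{\bw}$-eigenvalue $-i\mu$, and this fixes which of $v_2,v_3$ is attached to $\mu_{2,\bw}=-|\bw|$ in the degenerate case. Also, the general formula must be applied only to $d=2,3$.
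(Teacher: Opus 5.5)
Your overall strategy is the natural one: reduce to $\Phi_{\bw}v_{d,\bw}=-i\mu_{d,\bw}v_{d,\bw}$ via $\Phi_{\bw}\xi=\bw\times\xi$, then verify directly. The paper itself does not reprove this lemma; it imports it from earlier work. Your treatment of the case $|\br_{\bw}|>0$ is correct: the identities $\Phi_{\bw}a=-\br_{\bw}$ and $\Phi_{\bw}\br_{\bw}=|\bw|^2a$, the mutual orthogonality of $\bw,a,\br_{\bw}$, and the norm and cross-term computations all check out.

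The degenerate case, however, contains a sign error that breaks the argument as written. You claim $\mathbf{1}\times v_2=-i\sqrt3\,v_2$, but in fact $\mathbf{1}\times v_2=+i\sqrt3\,v_2$. For instance, the third component of $\mathbf{1}\times v_2$ is $\frac{1}{\sqrt3}\bigl(e^{-i\pi/3}-e^{i\pi/3}\bigr)=-i$, while the third component of $v_2$ is $-\frac{1}{\sqrt3}$; the other two components give the same ratio $i\sqrt3$.

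With your sign, for $\omega>0$ you would get $\Phi_{\bw}v_2=-i|\bw|\,v_2=-i\mu_{3,\bw}v_2$. Then $v_2$ would be the eigenvector for $\mu_{3,\bw}=|\bw|$, and the assignment $v_{2,\bw}=v_2$ for $\omega\ge 0$ would be backwards. Your next sentence, which says the sign split is exactly what makes $-i\mu_{2,\bw}=i|\bw|$ come out, therefore contradicts what you computed rather than following from it.

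With the correct sign, $\Phi_{\bw}v_2=i\sqrt3\,\omega\,v_2=i|\bw|\,v_2$ when $\omega>0$. By conjugation (since $\Phi_{\bw}$ is real), $\Phi_{\bw}v_3=-i\sqrt3\,\omega\,v_3=i|\bw|\,v_3$ when $\omega<0$. This is precisely the paper's choice of $v_{2,\bw}$, and $v_{3,\bw}=\bar v_{2,\bw}$ then picks up $-i\mu_{3,\bw}=-i|\bw|$, as required. You identified sign bookkeeping as the only real obstacle, and this is the step to redo. Your orthonormality checks in the degenerate case ($v_2\cdot\mathbf{1}=0$, $|v_2|=1$, $v_2\cdot v_2=0$) are fine.
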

\begin{rem}
	\begin{enumerate}
		\item Note that we have
		\[
		v_{1,-\bw} = -v_{1,\bw},  \quad v_{2,-\bw} = v_{3,\bw}, \quad v_{3,-\bw} = v_{2,\bw} 
		\]
		By Eq (\ref{eq:Phi_w}) and (\ref{eq:eigen_vector_general}), we have
		\begin{equation}\label{eq:Phi_w_case}
			\Phi_{\bw} v_{d,\bw} = -i\mu_{d,\bw} v_{d,\bw} = \begin{cases} 
				0 & \text{if } d=1 \\ 
				i|\bw| v_{2,\bw}  & \text{if } d=2 \\
				-i |\bw|  v_{3,\bw} & \text{if } d= 3
			\end{cases}.
		\end{equation}
		\item 
		Let
		\[
		R_{\bw} = \frac12 (v_{2,\bw} + \bar{v}_{2,\bw}),  \quad I_{\bw} = \frac1{2i} (v_{2,\bw} - \bar{v}_{2,\bw}),
		\]
		So we have
		\[
		v_{2,\bw} = R_{\bw} + iI_{\bw}, \quad v_{3,\bw} = \bar{v}_{2,\bw} = R_{\bw} - iI_{\bw}.
		\]
		One can verify directly $(R_{\bw},I_{\bw},\bw)$ forms a orthogonal base of $R^3$ for any $\bw\neq 0$ 
		\[
		\bw\cdot R_{\bw} =\bw\cdot I_{\bw}=R_{\bw}\cdot I_{\bw}=0, \quad |I_{\bw}| = |R_{\bw}| = \frac1{\sqrt 2}.
		\]
	\end{enumerate}
\end{rem}
We now consider the general case where the current density $\bJ = \bS \EE$ with $\bS$ defined by Eq (\ref{def:bs}).  To simplify the notation, we shall work in Heaviside-Lorentz units where $\eps=\mu= 1$ in the rest of the paper without loss of generosity, which is equivalent to replace $\sqrt \mu \HH$  by $\HH$,  $\sqrt{\eps}  \EE$ by $\EE$, and scale variable $t$ by $\sqrt{\mu\eps}$.  Eq (\ref{maxsys}) is equivalent to the following
\begin{equation}\label{eq:D}
	\begin{pmatrix}
		\frac{\partial {\EE}}{\partial t}   \\
		\frac{\partial{  \HH}}{\partial t}
	\end{pmatrix}
	=\BB \begin{pmatrix}
		\EE  \\
		\HH
	\end{pmatrix},   
\end{equation}
\begin{equation}\label{eq:D-init}
	\EE(\bx,0) =  E_0(\bx), \quad  \HH(\bx,0) =  H_0(\bx),
\end{equation}
where
\[
\BB= \left(
\begin{array}{cc}
	-\bS & \CU\\
	-\CU & \mathbf{0} \\
\end{array}
\right).
\] 
For any two complex vector functions $U(\bx,t),W(\bx,t)$ defined over $C^3\times [0,\infty)$, define
\[
\ZZ(\bx,t)=
\begin{pmatrix}
	U(\bx,t)  \\
	W(\bx,t)
\end{pmatrix}
\]
as a $6$ dimensional complex vector function. One can decompose $\ZZ, U, W$ into the sum of real and imaginary components, i.e. 
\[
\ZZ=\ZZ_r + i\ZZ_i,   \quad \ZZ_r= \left(
\begin{array}{c}
	 U_r\\
	W_r
\end{array}
\right),  \quad \ZZ_i=\left(
\begin{array}{c}
	U_i\\
	W_i
\end{array}
\right)
\]
Follow the idea in \cite{zou_maxwell}, we transform Maxwell's equation to abstract Cauchy Problem (ACP) and consider ACP \ref{eq:D_complex}-\ref{eq:D_complex_init} in the complex functional domain. 
\begin{eqnarray}
	\frac{\partial \ZZ(\bx, t)}{ \partial t} &=& \BB \ZZ(\bx, t),   \label{eq:D_complex} \\
	\quad \ZZ(\bx,0) &=& Z_{0}(\bx).   \label{eq:D_complex_init}   
\end{eqnarray}
where $Z_0(\bx)$ is a $6$-dim complex vector function defined on  $C^6$ and its real component is $(E_0,H_0)^T$. The solution can be expressed as \cite{engel_nagel}
\[
\ZZ(\bx, t) = e^{t\BB}Z_{0}(\bx).
\]
It is clear that $\ZZ$ solves Eq. (\ref{eq:D_complex}) if and only if both $\ZZ_r$ and $\ZZ_i$ solve Eq. (\ref{eq:D}). 

We first consider a solution when the initial value $Z_0(\bx):=Z_{0,\bw}(\bx)$ is in the $6$ dimensional complex vector space $L^{\bw}:=C^6 e^{ix\bw}$ for a given $\bw\in R^3$ and call $L^{\bw}$ as the base space associated to $\bw$.
Let 
\begin{equation}\label{def:Z}
Z_{0,\bw}(\bx)=
\begin{pmatrix}
	U  \\
	W
\end{pmatrix}=e^{i\bx\cdot \bw}\begin{pmatrix}
	X  \\
	Y
\end{pmatrix}
\in L^{\bw}, \quad X,Y \in C^3.
\end{equation}
We have
\begin{equation}\label{eq:eigen_B}
	\BB Z_{0,\bw}(\bx) = e^{i\bx \cdot \bw } \begin{pmatrix}
		-\bS X  + i \Phi_{\bw} Y   \\
		-i \Phi_{\bw}X
	\end{pmatrix} = i \left(
	\begin{array}{cc}
		i\bS & \Phi_{\bw}\\
		-\Phi_{\bw} & \mathbf{0} \\
	\end{array}
	\right)\begin{pmatrix}
		U  \\
		W
	\end{pmatrix} := i \BB_{\bw} Z_{0,\bw}(\bx),
\end{equation}
where $\Phi_{\bw}$ is defined by Eq \ref{eq:phi} and
\[
\BB_{\bw} = \left(
\begin{array}{cc}
	i\bS & \Phi_{\bw}\\
	-\Phi_{\bw} & \mathbf{0} \\
\end{array}
\right),
\]
which is treated as a linear operator on $C^6$.  With restriction of $Z_0(\bw)$ to $C^6$,   Eq. (\ref{eq:D_complex}) is then reduced to 
\begin{equation} \label{eq:ACP-w}
\frac{\partial Z(\bx, t)}{ \partial t} = i \BB_{\bw} Z(\bx, t).  
\end{equation}
To study properties of the solutions of Eq.(\ref{eq:ACP-w}), we look into the eigenvalues and associated eigenvectors of $\BB_{\bw}$. Let $(X^T,Y^T)^T$ be an eigenvector associated to an eigenvalue $\lambda$ of the matrix $\BB_{\bw}$.
Express $X,Y$ by the base $\{v_{d,\bw}\}_{d=1}^3$ in $C^3$ as
\begin{equation}\label{def:XY}
	X=\sum_{d=1}^3 a_d v_{d,\bw},  \qquad Y=\sum_{d=1}^3 b_d v_{d,\bw}.
\end{equation}
Note that $\bS$ stands for a linear operator on $C^3$ under the canonical base.  Since $V_{\bw}$ is unitary by Eq. (\ref{eq:normality_general}), $\bS$ can be transformed to $M_{\bw}$ under the base $V_{\bw}$ such that
\begin{equation}\label{def:M}
\bS V_{\bw} = V_{\bw} M_{\bw},  \qquad  M_{\bw} :=  V_{\bw}^{\dagger} \bS V_{\bw} :=(m_{d,k})_{1\le d,k\le 3}.
\end{equation}
By Eq \ref{eq:Phi_w_case}, 
$\BB_{\bw}(X^T,Y^T)^T=\lambda (X^T,Y^T)^T$ is equivalent to 
\[
e^{i\bx \cdot \bw} \begin{pmatrix}
	i\sum_{d=1}^3  v_{d,\bw} \sum_{1\le k\le 3}  a_k m_{d,k}  - i\sum_{d=1}^3 \mu_d b_d v_{d,\bw}  \\
	i\sum_{d=1}^3 \mu_d a_d v_{d,\bw} 
\end{pmatrix} = e^{i\bx \cdot \bw} 
\begin{pmatrix}
	\lambda \sum_{d=1}^3 a_d v_{d,\bw}  \\
	\lambda	\sum_{d=1}^3 b_d v_{d,\bw} 
\end{pmatrix}
\]
or
\begin{eqnarray}
	 \lambda b_d  &=& i a_d \mu_{d,\bw},  \quad 1\le d \le 3  \label{eq:implied_1}\\
	\lambda a_d   &=& -i b_d \mu_{d,\bw} + i \sum_{1\le k\le 3}  a_k m_{d,k}, \quad 1\le d\le 3. \label{eq:implied_2}
\end{eqnarray}
We now construct solutions from eigenvectors of $B_{\bw}$, which will be used as base solutions that can be used to construct solution with general initial conditions. 
\begin{lem}\label{prop:base_sol} Let $\{a_k, b_k\}_{k=1}^3, \lambda_{\bw}$ solve Eq (\ref{eq:implied_1}-\ref{eq:implied_2}), and $Z_{0,\bw}(\bx)$ be defined by Eq (\ref{def:Z}) and Eq. (\ref{def:XY}), then $Z_{0,\bw}(\bx)e^{i\lambda_{\bw} t}$ solves Eq. (\ref{eq:D_complex}) with initial condition $Z_{0,\bw}(\bx)$.
\end{lem}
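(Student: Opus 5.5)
The plan is to turn the coefficient equations (\ref{eq:implied_1})--(\ref{eq:implied_2}) back into an eigenvector statement for $\BB_{\bw}$, and then use (\ref{eq:eigen_B}) to convert that into the evolution equation (\ref{eq:D_complex}).

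First, set $X=\sum_d a_d v_{d,\bw}$ and $Y=\sum_d b_d v_{d,\bw}$ as in (\ref{def:XY}), so that $Z_{0,\bw}(\bx)=e^{i\bx\cdot\bw}(X^T,Y^T)^T$. We compute $\BB_{\bw}(X^T,Y^T)^T$ in the basis $\{v_{d,\bw}\}$. For the $\Phi_{\bw}$ terms, use (\ref{eq:Phi_w_case}), which gives $\Phi_{\bw}v_{d,\bw}=-i\mu_{d,\bw}v_{d,\bw}$. For the $\bS$ term, use (\ref{def:M}): $\bS X=\bS V_{\bw}a=V_{\bw}M_{\bw}a$, so $\bS X=\sum_d v_{d,\bw}\sum_k m_{d,k}a_k$.

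Comparing coefficients of $v_{d,\bw}$, the upper block is $\sum_d v_{d,\bw}\bigl(i\sum_k m_{d,k}a_k - i\mu_{d,\bw}b_d\bigr)$ and the lower block is $\sum_d v_{d,\bw}\, i\mu_{d,\bw}a_d$. By (\ref{eq:implied_1})--(\ref{eq:implied_2}) these equal $\lambda_{\bw}X$ and $\lambda_{\bw}Y$. Since $V_{\bw}$ is invertible by (\ref{eq:normality_general}), the equivalence is exact in both directions. Hence $\BB_{\bw}(X^T,Y^T)^T=\lambda_{\bw}(X^T,Y^T)^T$.

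Next, define $\ZZ(\bx,t)=Z_{0,\bw}(\bx)e^{i\lambda_{\bw}t}$. Differentiating in $t$ gives
\[
\partial_t\ZZ=i\lambda_{\bw}Z_{0,\bw}(\bx)e^{i\lambda_{\bw}t}=i\,\BB_{\bw}Z_{0,\bw}(\bx)\,e^{i\lambda_{\bw}t}.
\]
By (\ref{eq:eigen_B}), $i\BB_{\bw}$ acting on the constant vector coincides with the differential operator $\BB$ acting on $Z_{0,\bw}(\bx)\in L^{\bw}$. The scalar factor $e^{i\lambda_{\bw}t}$ does not depend on $\bx$, so it commutes with $\CU$ and $\bS$. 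Therefore $\partial_t\ZZ=\BB\ZZ$, and at $t=0$ we have $\ZZ(\bx,0)=Z_{0,\bw}(\bx)$, as the statement requires.

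The only delicate step is the sign bookkeeping between $\CU$ and $\Phi_{\bw}$. The paper writes $\CU(\xi_{d,\bw})=\mu_{d,\bw}\xi_{d,\bw}$ in Lemma \ref{prop:curl-2}, but (\ref{eq:Phi_w_case}) gives $\Phi_{\bw}v_{d,\bw}=-i\mu_{d,\bw}v_{d,\bw}$. We must use the latter consistently, so that the coefficients $\pm i\mu_{d,\bw}$ match (\ref{eq:implied_1})--(\ref{eq:implied_2}) exactly. The rest is direct substitution. If $\lambda_{\bw}$ is not real, the function is still a valid solution; it simply grows or decays in $t$.
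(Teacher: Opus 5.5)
Your proposal is correct and follows the paper's proof. The paper likewise relies on the equivalence of (\ref{eq:implied_1})--(\ref{eq:implied_2}) with $\BB_{\bw}(X^T,Y^T)^T=\lambda_{\bw}(X^T,Y^T)^T$, stated just before the lemma, and then uses (\ref{eq:eigen_B}) to get $\BB(Z_{0,\bw}e^{i\lambda_{\bw}t})=i\BB_{\bw}Z_{0,\bw}e^{i\lambda_{\bw}t}=i\lambda_{\bw}Z_{0,\bw}e^{i\lambda_{\bw}t}=\partial_t(Z_{0,\bw}e^{i\lambda_{\bw}t})$. You simply re-derive the coefficient equivalence explicitly, and the sign issue you flag is not a real inconsistency: $\CU(\xi_{d,\bw})=\mu_{d,\bw}\xi_{d,\bw}$ and $\Phi_{\bw}v_{d,\bw}=-i\mu_{d,\bw}v_{d,\bw}$ are equivalent by (\ref{eq:Phi_w}).
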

\begin{proof}
	Since $(X^T,Y^T)^T$ is an eigenvector of $B_{\bw}$,  we obtain by Eq (\ref{eq:eigen_B})
	\[
	\bB (Z_{0,\bw}e^{i\lambda_{\bw} t})  = iB_{\bw} Z_{0,\bw}e^{i\lambda_{\bw} t}  = i\lambda_{\bw} Z_{0,\bw}e^{i\lambda_{\bw} t} =\frac{\partial{(Z_{0,\bw}e^{i\lambda_{\bw} t})}}{\partial t} 
	\]
\end{proof}
Note that $\mu_{1,\bw}=0$,  one can see that $\lambda=0$ is always an eigenvalue and 
\[
h_{1,\bw}:=(a_1,a_2,a_3,b_1,b_2,b_3) =(0,0,0,1,0,0)^T,
\]
is an associated eigenvector.  
As a corollary of Lemma \ref{prop:base_sol}, we have
\begin{cor}\label{prop:solution_Lw} Assume that $\{\lambda^k_{\bw}, 1\le k\le 6\}$ and
\[  
h^k_{\bw} := (a^k_{1,\bw}, a^k_{2,\bw}, a^k_{3,\bw}, b^k_{1,\bw}, b^k_{2,\bw}, b^k_{3,\bw})^T , \quad 1\le k\le 6.
\]
solve Eq (\ref{eq:implied_1}-\ref{eq:implied_2}) such that $\{h^k_{\bw}\}_{k=1}^6$ are $6$ linear independent.  Define
\begin{equation}\label{def:Z^k_w}
	X^k_{\bw}=\sum_{d=1}^3 a^k_{d,\bw} v_{d,\bw},  \quad Y^k_{\bw}=\sum_{d=1}^3 b^k_{d,\bw} v_{d,\bw}, \quad Z^k_{\bw}=e^{i\bx\cdot \bw}\begin{pmatrix}
		X^k_{\bw}  \\
		Y^k_{\bw}
	\end{pmatrix}
\end{equation}
Then  $\{Z^k_{\bw} e^{i \lambda_{k,\bw} t}\}_{k=1}^6$ solve Eq. (\ref{eq:D_complex}).  Therefore, for any initial condition $Z_{0,\bw}(\bx) \in L^{\bw}$,  the solution of Eq. (\ref{eq:D_complex}-\ref{eq:D_complex_init}) can be represented by 
\begin{equation}\label{eq:Z_w}
Z_{\bw}(\bx, t)  = \sum_{1\le k\le 6} c^k_{\bw} Z^k_{\bw} e^{i \lambda_{k,\bw} t}
\end{equation}
where the coefficients $\{c^k_{\bw}\}_{k=1}^6$ are uniquely determined by 
\[
Z_{\bw}(\bx, 0) = Z_{0,\bw}(\bx)
\]
\end{cor}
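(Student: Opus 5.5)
The statement has three parts: each $Z^k_{\bw}e^{i\lambda_{k,\bw}t}$ is a solution, the coefficients $c^k_{\bw}$ exist and are unique, and the resulting superposition is the solution of the Cauchy problem. The plan is to reduce everything to the finite-dimensional system (\ref{eq:ACP-w}) on $L^{\bw}\cong C^6$ and use linear algebra plus uniqueness for linear ODEs.

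\emph{Step 1: each $Z^k_{\bw}e^{i\lambda_{k,\bw}t}$ is a solution.} For each $k$, the tuple $(\lambda^k_{\bw},h^k_{\bw})$ solves (\ref{eq:implied_1})--(\ref{eq:implied_2}). Hence, by the computation preceding Lemma \ref{prop:base_sol} (which uses (\ref{eq:Phi_w_case}) and (\ref{def:M})), the vector $(X^{k\,T}_{\bw},Y^{k\,T}_{\bw})^T$ is an eigenvector of $\BB_{\bw}$ with eigenvalue $\lambda^k_{\bw}$. Lemma \ref{prop:base_sol} then shows directly that $Z^k_{\bw}e^{i\lambda_{k,\bw}t}$ solves (\ref{eq:D_complex}).

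\emph{Step 2: the $Z^k_{\bw}$ form a basis of $L^{\bw}$.} The map
\[
(a_1,a_2,a_3,b_1,b_2,b_3)\mapsto e^{i\bx\cdot\bw}\big(\textstyle\sum_d a_d v_{d,\bw},\ \sum_d b_d v_{d,\bw}\big)
\]
is a linear isomorphism $C^6\to L^{\bw}$. Indeed, its matrix is $\mathrm{diag}(V_{\bw},V_{\bw})$ times the scalar factor $e^{i\bx\cdot\bw}$, and $V_{\bw}$ is unitary by (\ref{eq:normality_general}). Since the $h^k_{\bw}$ are linearly independent, their images $Z^k_{\bw}$ are linearly independent, and hence a basis of the $6$-dimensional space $L^{\bw}$. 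Any $Z_{0,\bw}\in L^{\bw}$ therefore has a unique expansion $Z_{0,\bw}=\sum_k c^k_{\bw}Z^k_{\bw}$. Concretely, the coefficients are obtained by solving the $6\times 6$ invertible linear system with matrix $\mathrm{diag}(V_{\bw},V_{\bw})(h^1_{\bw},\dots,h^6_{\bw})$.

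\emph{Step 3: superposition and uniqueness.} By linearity of (\ref{eq:D_complex}), the sum (\ref{eq:Z_w}) with these coefficients is a solution, and at $t=0$ it equals $Z_{0,\bw}$. To identify it with \emph{the} solution $e^{t\BB}Z_{0,\bw}$, note that $\BB$ maps $L^{\bw}$ into itself by (\ref{eq:eigen_B}). Within $L^{\bw}$, the problem is therefore the constant-coefficient linear ODE $\dot z=i\BB_{\bw}z$ in $C^6$, whose solution is unique (Picard--Lindel\"of, or the explicit form $e^{it\BB_{\bw}}z_0$). So the representation agrees with the unique solution.

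The only delicate point is this final identification. One must make sure that the solution of the ACP with initial data in $L^{\bw}$ stays in $L^{\bw}$, so that uniqueness can be invoked at the level of the $6$-dimensional ODE; the invariance of $L^{\bw}$ under $\BB$ from (\ref{eq:eigen_B}) settles this. The rest is routine.
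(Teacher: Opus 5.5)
Your proposal is correct and follows the paper's route. The paper states the corollary as a direct consequence of Lemma \ref{prop:base_sol}, combined with the linear independence of the $h^k_{\bw}$ transported to $L^{\bw}$ by the unitary $V_{\bw}$ (your Steps 1--2), and superposition then matches the initial data. Your Step 3 goes beyond the paper, which simply takes uniqueness from the cited well-posedness of the Cauchy problem. Note, though, that invariance of $L^{\bw}$ under $\BB$ does not by itself force an arbitrary solution to stay in $L^{\bw}$, so that identification should rest on uniqueness for the full problem (or a Fourier-coefficient argument) rather than on Picard--Lindel\"of in $C^6$ alone.
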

We are now ready to formulate analytic solutions of Eq. (\ref{eq:D}-\ref{eq:D-init}). 
\begin{theo}\label{thm-main}
Let $Z_0(\bx)=(\EE^T_0(\bx), \HH^T_0(\bw))^T$ be a $6$-dim complex periodic function with its Fourier series,
\begin{equation}\label{eq:complex_init}
	Z_0:=Z^R_0  + i Z^I_0= \sum_{\bw} \begin{pmatrix}
		X_{\bw}  \\
		Y_{\bw}
	\end{pmatrix} e^{i\bx \cdot \bw} = \sum_{\bw} Z_{0,\bw}(\bx)
\end{equation}
and
\begin{equation}\label{eq:real_init}
	Z^R_0 = \begin{pmatrix}
		E_0 (\bx)  \\
		H_0 (\bx)
	\end{pmatrix}.
\end{equation}
such that
	\begin{equation}\label{abs_sum_2}
		\sum_{\bw} |\bw| \sum_{1\le d\le 3} (|a_{d,\bw}| + |b_{d,\bw}| ) < \infty.
	\end{equation}
	where $a_{d,\bw},b_{d,\bw}$ are defined by Eqs (\ref{def:Z}) and (\ref{def:XY}).  Assume that for each $\bw$ in the summation \ref{eq:complex_init},  the condition in Corollary \ref{prop:solution_Lw} holds so that $\{Z^k_{\bw} \}_{k=1}^6$ by Eq. (\ref{def:Z^k_w}) is well defined. Then the solution of (\ref{eq:D_complex}-\ref{eq:D_complex_init}) is given by
	\begin{equation} \label{label:sol_allcase}
		Z(\bx, t) =  \sum_{\bw} Z_{\bw} (\bx, t) := Z^R(\bx, t) + iZ^I(\bx, t),
	\end{equation}
	where $Z_{\bw} (\bx, t)$ is  defined by Eq. (\ref{eq:Z_w}). Hence, the real component $Z^R(\bx, t)$ solves Eq. (\ref{eq:D}-\ref{eq:D-init}).
\end{theo}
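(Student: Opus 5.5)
The plan is to apply superposition. We use Corollary \ref{prop:solution_Lw} on each wave vector, show that the resulting series and its derivatives converge, and then take real parts.

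Fix $\bw$ in the summation. By Corollary \ref{prop:solution_Lw}, $Z_{\bw}(\bx,t)=\sum_k c^k_{\bw} Z^k_{\bw} e^{i\lambda_{k,\bw}t}$ solves (\ref{eq:D_complex}) with initial value $Z_{0,\bw}$. Moreover it equals $e^{it\BB_{\bw}}(X_{\bw}^T,Y_{\bw}^T)^T e^{i\bx\cdot\bw}$, because $\{h^k_{\bw}\}$ is an eigenbasis of $\BB_{\bw}$ (in the coordinates of $V_{\bw}$). We then set $Z=\sum_{\bw}Z_{\bw}$. At $t=0$ this series reduces to $\sum_{\bw} Z_{0,\bw}=Z_0$ by (\ref{eq:complex_init}), so the initial condition holds.

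The main step is to justify that $\partial_t$ and $\CU$ commute with the infinite sum. This needs a bound of the form
\[
|Z_{\bw}(\bx,t)| \le C_T \sum_d (|a_{d,\bw}|+|b_{d,\bw}|)
\]
uniformly for $t\in[0,T]$ and $\bx$, together with the analogous bound multiplied by $|\bw|$ for $\CU Z_{\bw}$ and $\partial_t Z_{\bw}=i\BB_{\bw}Z_{\bw}e^{i\bx\cdot\bw}$. Expressing the solution through eigenvalue coefficients $c^k_{\bw}$ would be dangerous, since these may blow up near degenerate $\bw$. We avoid this by using the semigroup form instead. An energy estimate gives the bound directly: writing $\zeta(t)=e^{it\BB_{\bw}}\zeta_0$, we have $\frac{d}{dt}|\zeta|^2=-2\,\mathrm{Re}\,\langle X,\bS X\rangle$, since the $\Phi_{\bw}$ blocks are skew-adjoint. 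Hence $|\zeta(t)|\le e^{\|\bS\|t}|\zeta_0|$, uniformly in $\bw$. By unitarity of $V_{\bw}$, $|\zeta_0|$ is comparable to $\sum_d(|a_{d,\bw}|+|b_{d,\bw}|)$. Also $\|\BB_{\bw}\|\le \|\bS\|+|\bw|$, and $|\Phi_{\bw}|=|\bw|$.

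With these bounds, condition (\ref{abs_sum_2}) yields uniform absolute convergence on $\mathbb{R}^3\times[0,T]$ of the series for $Z$, for $\partial_t Z$, and for the first spatial derivatives. Each spatial derivative brings down a factor $w_j$, which is bounded by $|\bw|$. Termwise differentiation is therefore legitimate, and $\partial_t Z=\sum_{\bw}\BB Z_{\bw}=\BB Z$. For terms with $\bw=0$ the operator $\BB$ reduces to $-\bS$ acting on constants, which is trivially fine.

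Finally, since $\bS$ is real and $\CU$ commutes with taking real parts, $\BB$ maps real fields to real fields. Taking real parts of (\ref{eq:D_complex}) shows that $Z^R$ solves (\ref{eq:D}). Its initial value is $Z^R(\bx,0)=\mathrm{Re}\,Z_0=(E_0,H_0)^T$ by (\ref{eq:real_init}), which is (\ref{eq:D-init}). The step I expect to be the main obstacle is obtaining the uniform-in-$\bw$ bound. The energy estimate should handle it, in place of any bound on the eigenvector coefficients $c^k_{\bw}$.
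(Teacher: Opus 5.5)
Your proposal is correct and follows the paper's route: each $Z_{\bw}$ solves (\ref{eq:D_complex}) by Corollary \ref{prop:solution_Lw}, and (\ref{abs_sum_2}) justifies passing $\partial_t$ and $\CU$ through the sum before taking real parts. The paper only asserts this interchange in one line; your uniform bound $|e^{it\BB_{\bw}}\zeta_0|\le e^{\|\bS\|t}|\zeta_0|$ is what actually makes (\ref{abs_sum_2}) sufficient, since it avoids any control of the eigen-expansion coefficients $c^k_{\bw}$, and it also uses, implicitly, that $|\bw|$ is bounded below on the nonzero lattice wave vectors when handling the undifferentiated series.
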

\begin{proof}
	By Corollary \ref{prop:solution_Lw}, each item $Z_{\bw}(\bx, t)$ in the summation of Eq. (\ref{label:sol_allcase}) solve the equation (\ref{eq:D_complex}).  Hence $Z(\bx, t)$ solves the equation (\ref{eq:D_complex}) by the assumption (\ref{abs_sum_2}) that allows passing derivative operation into summation of the series.
\end{proof}
Since eigenvectors from different eigenvalues are always independent,  we have
\begin{cor}
Follow the notations in Theorem \ref{thm-main},  assume that for each $\bw$ in the summation \ref{eq:complex_init}, there are $6$ different eigenvalues of $\bB_{\bw}$,  then the solution of (\ref{eq:D_complex}) with the initial value $Z_0$ is $Z(\bx, t)$ defined by Eq (\ref{label:sol_allcase}) and its real component solves Eq. (\ref{eq:D}-\ref{eq:D-init}).
\end{cor}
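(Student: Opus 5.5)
The corollary will follow from Theorem \ref{thm-main} once we check that, when $\bB_{\bw}$ has six distinct eigenvalues, the hypothesis of Corollary \ref{prop:solution_Lw} holds for that $\bw$. That hypothesis asks for six linearly independent solutions $\{h^k_{\bw}\}_{k=1}^6$ of Eqs (\ref{eq:implied_1})--(\ref{eq:implied_2}). So the plan is to produce these vectors from the eigenvectors of $\BB_{\bw}$ and then invoke the theorem for every $\bw$ in the sum (\ref{eq:complex_init}).

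\textbf{Step 1: eigenvectors in the coordinates $(a,b)$.} Fix $\bw$ and let $\lambda_1,\dots,\lambda_6$ be the six distinct eigenvalues of the $6\times 6$ matrix $\BB_{\bw}$. For each $\lambda_k$, choose a nonzero eigenvector $(X_k^T,Y_k^T)^T\in C^6$. Since $V_{\bw}$ is unitary by (\ref{eq:normality_general}), we may expand $X_k$ and $Y_k$ in the basis $\{v_{d,\bw}\}$ as in (\ref{def:XY}). This gives coefficients $h^k_{\bw}=(a^k_1,a^k_2,a^k_3,b^k_1,b^k_2,b^k_3)^T$. The derivation preceding Lemma \ref{prop:base_sol} shows that $\BB_{\bw}(X^T,Y^T)^T=\lambda(X^T,Y^T)^T$ is equivalent to (\ref{eq:implied_1})--(\ref{eq:implied_2}), using (\ref{eq:Phi_w_case}) and (\ref{def:M}). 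Hence each $(h^k_{\bw},\lambda_k)$ solves that system.

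\textbf{Step 2: linear independence.} Eigenvectors belonging to distinct eigenvalues are linearly independent, by the standard induction argument. So $\{(X_k^T,Y_k^T)^T\}_{k=1}^6$ are independent in $C^6$. The map $h\mapsto (X^T,Y^T)^T$ is given by the invertible block matrix $\mathrm{diag}(V_{\bw},V_{\bw})$, so the $h^k_{\bw}$ are independent as well. This is exactly the hypothesis of Corollary \ref{prop:solution_Lw}, and therefore the functions $Z^k_{\bw}$ of (\ref{def:Z^k_w}) are well defined for every $\bw$ in the sum.

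\textbf{Step 3: apply Theorem \ref{thm-main}.} Under the summability assumption (\ref{abs_sum_2}), which the corollary inherits from the notations of Theorem \ref{thm-main}, the theorem gives that $Z(\bx,t)$ in (\ref{label:sol_allcase}) solves (\ref{eq:D_complex})--(\ref{eq:D_complex_init}). Moreover, $\ZZ$ solves (\ref{eq:D_complex}) if and only if its real part solves (\ref{eq:D}), because $\BB$ has real coefficients. The real part also matches the initial data by (\ref{eq:real_init}). Hence $Z^R$ solves (\ref{eq:D})--(\ref{eq:D-init}).

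There is no real obstacle here. The only point needing care is the base-change in Step 2: independence must be transferred between $C^6$ coordinates and the $(a,b)$ coordinates, and unitarity of $V_{\bw}$ gives this.
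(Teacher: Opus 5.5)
Your proof is correct and follows the paper's argument. The paper justifies the corollary in one line: eigenvectors for distinct eigenvalues are independent, so the hypothesis of Corollary~\ref{prop:solution_Lw} holds and Theorem~\ref{thm-main} applies. Your Step~2 only makes explicit how that independence passes through the invertible map $\mathrm{diag}(V_{\bw},V_{\bw})$. One small wording point in Step~3: you need only the forward implication, that $\ZZ$ solving (\ref{eq:D_complex}) makes $\ZZ_r$ solve (\ref{eq:D}), since $\BB$ is real; the converse you state would also require $\ZZ_i$ to solve it.
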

\section{The Maxwell equation with a scalar conductivity $J=\sigma \EE$}\label{sec:j=sigmmaE}
In this section,  we assume that $\bJ$ is propositional to $E$ by applying Ohm's law \cite{bossavit} with $\bS =\sigma I_3$.  There are $4$ different eigenvalues $\BB_{\bw}$ and $6$ linear independent eigenvectors can be constructed when $\sigma\neq 2|\bw|$ as shown in Lemma \ref{lem:eigen-sigma}.  Special treatment is conducted for $\sigma\neq 2|\bw|$ in Subsection \ref{subsection-special} where the desired solution can be derived by taking limitation of the solutions as $\sigma \to 2|\bw|$.  We conclude this section by summarizing the desired analytic solution with a general initial condition on Subsection \ref{subsection-all_together}.
\subsection{The derivation of solutions in $L^{\bw}$ under $\sigma\neq 2|\bw|$}\label{subsection-general}
One can solve Eq (\ref{eq:implied_1}-\ref{eq:implied_2}) for $\lambda$ and non-zero vector $\{h^k_{\bw}\}_{k=1}^6$ as follows. The details of verification can be found in Appendix \ref{app-lem:eigen-sigma}.
\begin{lem}\label{lem:eigen-sigma}
	The eigenvalues of $\BB_{\bw}$ are given by  
	\begin{eqnarray}
		\lambda_{1,\bw} &=&0, \quad \lambda_{2,\bw} = i\sigma, \label{def:lambda_12}\\
		\lambda_{3,\bw}&=&\lambda_{4,\bw}=\frac{1}{2}(i\sigma +\sqrt{4|\bw|^2-\sigma^2}):=\lambda^+_{\bw}, \label{def:lambda_34}\\
		\lambda_{5,\bw}&=&\lambda_{6,\bw}=\frac{1}{2}(i\sigma -\sqrt{4|\bw|^2-\sigma^2}):=\lambda^-_{\bw}, \label{def:lambda_56}
	\end{eqnarray}
	with the  associated eigenvectors 
	\begin{equation}\label{def:zkw}
	Z^k_{\bw}= e^{i\bx\cdot \bw}\begin{pmatrix}
		X^k_{\bw}  \\
		Y^k_{\bw} 
	\end{pmatrix}, \quad 1\le k\le 6.  
	\end{equation}
	where $\{X^k_{\bw},Y^k_{\bw}\}_{k=1}^6$ are defined as follows.
	\begin{eqnarray*}
		\begin{pmatrix}
			X^1_{\bw}  \\
			Y^1_{\bw} 
		\end{pmatrix} &=& \begin{pmatrix}
			0  \\
			v_{1,\bw} 
		\end{pmatrix}, \quad \quad \quad 	
		\begin{pmatrix}
			X^2_{\bw}  \\ 
			Y^2_{\bw} 
		\end{pmatrix} =\begin{pmatrix}
			v_{1,\bw}  \\
			0
		\end{pmatrix}, \nonumber\\	
		\begin{pmatrix}
			X^3_{\bw}  \\
			Y^3_{\bw} 
		\end{pmatrix} &=& \begin{pmatrix}
			\lambda_{3,\bw} v_{2,\bw}  \\
			-i|\bw|v_{2,\bw}
		\end{pmatrix},\quad
		\begin{pmatrix}
			X^4_{\bw}  \\
			Y^4_{\bw} 
		\end{pmatrix} =\begin{pmatrix}
			\lambda_{4,\bw} v_{3,\bw}  \\
			i|\bw|v_{3,\bw}
		\end{pmatrix}, \label{xy_123456} \\
		\begin{pmatrix}
			X^5_{\bw}  \\
			Y^5_{\bw} 
		\end{pmatrix}&=&
		\begin{pmatrix}
			\lambda_{5,\bw} v_{2,\bw}  \\
			-i|\bw|v_{2,\bw}
		\end{pmatrix}, \quad
		\begin{pmatrix}
			X^6_{\bw}  \\
			Y^6_{\bw} 
		\end{pmatrix} = \begin{pmatrix}
			\lambda_{6,\bw} v_{3,\bw}  \\
			i|\bw|v_{3,\bw}
		\end{pmatrix}. \nonumber
	\end{eqnarray*}
\end{lem}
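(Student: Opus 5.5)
With $\bS=\sigma I_3$ and $V_{\bw}$ unitary by Eq. (\ref{eq:normality_general}), we get $M_{\bw}=V_{\bw}^{\dagger}\sigma I_3V_{\bw}=\sigma I_3$, so $m_{d,k}=\sigma\delta_{d,k}$. The system (\ref{eq:implied_1})--(\ref{eq:implied_2}) therefore splits into three independent $2\times 2$ systems, one for each index $d$:
\[
\lambda b_d = i\mu_{d,\bw}a_d,\qquad \lambda a_d = -i\mu_{d,\bw}b_d + i\sigma a_d .
\]
In matrix form this is $\lambda (a_d,b_d)^T = A_d (a_d,b_d)^T$, where the rows of $A_d$ are $(i\sigma,\,-i\mu_{d,\bw})$ and $(i\mu_{d,\bw},\,0)$. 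The plan is to find the eigenpairs of each $A_d$. The six eigenpairs of $\BB_{\bw}$ are then obtained by placing each $(a_d,b_d)$ in the $d$-th slots and zeros elsewhere, and translating back via Eq. (\ref{def:XY}).

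The characteristic polynomial of $A_d$ is $\lambda^2 - i\sigma\lambda - \mu_{d,\bw}^2=0$.

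For $d=1$ we have $\mu_{1,\bw}=0$, so $\lambda\in\{0,i\sigma\}$. The eigenvectors are $(a_1,b_1)=(0,1)$ for $\lambda=0$, which gives $(X,Y)=(0,v_{1,\bw})$, and $(1,0)$ for $\lambda=i\sigma$, which gives $(v_{1,\bw},0)$.

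For $d=2,3$ we have $\mu_{d,\bw}^2=|\bw|^2$. Solving $\lambda^2-i\sigma\lambda-|\bw|^2=0$ gives
\[
\lambda=\tfrac12\bigl(i\sigma\pm\sqrt{-\sigma^2+4|\bw|^2}\bigr)=\lambda^{\pm}_{\bw}.
\]
Each of these is a double eigenvalue of $\BB_{\bw}$, contributed once by $d=2$ and once by $d=3$.

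To get the eigenvectors, take $a_d=\lambda$ and $b_d=i\mu_{d,\bw}a_d/\lambda=i\mu_{d,\bw}$. By Eq. (\ref{eq:eigenvalue}), $\mu_{2,\bw}=-|\bw|$ and $\mu_{3,\bw}=|\bw|$, so $b_2=-i|\bw|$ and $b_3=i|\bw|$. This reproduces exactly $(\lambda v_{2,\bw},-i|\bw|v_{2,\bw})$ and $(\lambda v_{3,\bw},\,i|\bw|v_{3,\bw})$ as listed in the statement. We must still check the second equation of the $2\times2$ system. It reduces to $\lambda^2=|\bw|^2+i\sigma\lambda$, which is precisely the characteristic equation.

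It remains to confirm that these six vectors are linearly independent, so that they span $C^6$ and Corollary \ref{prop:solution_Lw} applies. Vectors supported on different indices $d$ are independent because $\{v_{d,\bw}\}$ is a basis. Within a fixed $d$, we need the two $2$-vectors to be independent.
\begin{itemize}
\item For $d=1$ the vectors $(0,1)$ and $(1,0)$ are clearly independent.
\item For $d=2$ the vectors $(\lambda^+,-i|\bw|)$ and $(\lambda^-,-i|\bw|)$ are independent if and only if $\lambda^+\neq\lambda^-$, i.e. $\sqrt{4|\bw|^2-\sigma^2}\neq0$, which is exactly the standing assumption $\sigma\neq 2|\bw|$. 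The same argument covers $d=3$.
\end{itemize}

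One degenerate case needs a remark. If $\bw=0$, then $\mu_{d,\bw}=0$ for all $d$, and $\lambda^{-}$ equals $0$ while $\lambda^+=i\sigma$. The vectors for $d=2,3$ are then still nonzero, and still independent provided $\sigma\neq0$. So I would note that the lemma is really intended for $|\bw|>0$, where $v_{d,\bw}$ are defined by Lemma \ref{prop:curl-2}.

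The main obstacle is not the algebra but this bookkeeping: matching the sign conventions for $\mu_{2,\bw},\mu_{3,\bw}$ with the stated $Y$-components, and making explicit that $\sigma\neq2|\bw|$ is exactly the condition needed for the eigenvectors to form a basis.
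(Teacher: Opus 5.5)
Your argument is correct, but it is organized differently from the paper's.

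The paper's appendix proof is pure verification. It applies $\BB_{\bw}$ directly to each of the six listed vectors and uses $\Phi_{\bw}v_{d,\bw}=-i\mu_{d,\bw}v_{d,\bw}$ from Eq.~(\ref{eq:Phi_w_case}). Each case is then closed with the Vieta relations $\lambda^+_{\bw}\lambda^-_{\bw}=-|\bw|^2$ and $\lambda^+_{\bw}+\lambda^-_{\bw}=i\sigma$.

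You instead pass to coordinates in the unitary basis $V_{\bw}$ and note $M_{\bw}=\sigma I_3$. After a permutation, $\BB_{\bw}$ becomes block diagonal with three $2\times 2$ blocks $A_d$, so the eigenpairs are \emph{derived} rather than guessed and checked.

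What your route buys is completeness. It gives the full characteristic polynomial $\lambda(\lambda-i\sigma)(\lambda^2-i\sigma\lambda-|\bw|^2)^2$, so the listed values are all the eigenvalues, with algebraic multiplicities, for every $\sigma$. This includes $\sigma=2|\bw|$, where $\lambda^{\pm}_{\bw}$ merge into a fourfold eigenvalue $i|\bw|$ whose eigenspace is only two-dimensional (each block $A_2,A_3$ becomes a Jordan block). That is exactly why Subsection~\ref{subsection-special} needs a limiting argument.

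The paper's check on its own only shows that the listed numbers are eigenvalues. That they exhaust the spectrum follows only when the six vectors are independent, which the paper asserts after the lemma; you actually prove that independence. The paper's route, in exchange, is shorter and needs no change of basis.

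One correction to your side remark on $\bw=0$. There $\lambda^-_{\bw}=0$ and $|\bw|=0$, so $(X^5,Y^5)=(\lambda^-_{\bw}v_{2,\bw},-i|\bw|v_{2,\bw})$ is the zero vector, and likewise $(X^6,Y^6)$. They are not eigenvectors at all, and are certainly not independent.

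In general, the $2\times2$ determinant for $d=2$ is $-i|\bw|(\lambda^+_{\bw}-\lambda^-_{\bw})$. Independence therefore requires $|\bw|>0$ as well as $\sigma\neq 2|\bw|$. Your conclusion that the lemma is meant for $|\bw|>0$ is right, but the reason is stronger than the one you gave.
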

Under the assumption $\sigma\neq 2|\bw|$,  $\{Z^k_{\bw}\}_{k=1}^6$ form a base of $L^{\bw}$ and the analytic solution with initial $Z_{0,\bw} \in L^{\bw}$ can be derived based by Corollary \ref{prop:solution_Lw}.  
\begin{prop}\label{prop:summary-general} Let $\bw \in R^3$ such that $\sigma\neq 2|\bw|$ and 
	\begin{equation} \label{eq:init_general_sigma}
		Z_{0,\bw}(\bx) = \begin{pmatrix}
			X_{\bw}  \\
			Y_{\bw}
		\end{pmatrix} e^{i\bx \cdot \bw} =: Z^R_{0,\bw}(\bx)  + i Z^I_{0,\bw}(\bx),
	\end{equation}
be the initial condition with
	\begin{equation}\label{eq:Xw_Yw}
		X_{\bw}=\sum_{d=1}^3 a_{d,\bw} v_{d,\bw},  \quad Y_{\bw}=\sum_{d=1}^3 b_{d,\bw} v_{d,\bw}.
	\end{equation}
Then $Z_{0,\bw}(\bx)$ can be expressed as the linear combination of the base $\{Z^k_{\bw}\}^6_{k=1}$ in $L^{\bw}$
	\begin{equation}\label{xy}
		Z_{0,\bw} =\sum_{1\le k\le 6}c_{k,\bw} Z^k_{\bw} 
	\end{equation} 
where $Z^k_{\bw}$ are defined by Eq (\ref{def:zkw}) and 
	\begin{eqnarray}
		c_{1,\bw} &=& b_{1,\bw}, \quad c_{2,\bw} = a_{1,\bw}, \nonumber\\
		c_{3,\bw} &=& \frac{|\bw|a_{2,\bw} - i\lambda^-b_{2,\bw}}{|\bw|\sqrt{4|\bw|^2-\sigma^2}}, \quad c_{5,\bw} = \frac{-|\bw|a_{2,\bw} + i\lambda^+b_{2,\bw}}{|\bw|\sqrt{4|\bw|^2-\sigma^2}}\label{ck}\\
		c_{4,\bw} &=& \frac{|\bw|a_{3,\bw} + i\lambda^- b_{3,\bw}}{|\bw|\sqrt{4|\bw|^2-\sigma^2}}, \quad c_{6,\bw} = -\frac{|\bw|a_{3,\bw} + i\lambda^+b_{3,\bw}}{|\bw|\sqrt{4|\bw|^2-\sigma^2}}\nonumber
	\end{eqnarray}
	and the solution of Eq (\ref{eq:D_complex}) with the given initial condition $Z_{0,\bw}(\bx)$ is   
	\begin{equation}\label{Z_w_general}
		Z_{\bw}(\bx, t) = \sum_{1\le k\le 6} c_{k,\bw}  Z^k_{\bw} e^{i \lambda_{k,\bw} t}.
	\end{equation}
\end{prop}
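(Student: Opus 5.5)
The plan is to reduce the statement to a coordinate computation in the orthonormal basis $\{v_{d,\bw}\}_{d=1}^3$ of Lemma \ref{prop:curl-2}, and then invoke Lemma \ref{prop:base_sol} together with superposition. Throughout I assume $|\bw|>0$, so that $V_{\bw}$ is defined. The hypothesis $\sigma\neq 2|\bw|$ gives $\sqrt{4|\bw|^2-\sigma^2}\neq 0$. This holds whether the root is real ($\sigma<2|\bw|$) or purely imaginary ($\sigma>2|\bw|$), so $\lambda^+_{\bw}-\lambda^-_{\bw}=\sqrt{4|\bw|^2-\sigma^2}\neq0$ and all displayed denominators are nonzero.

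First I expand $\sum_k c_{k,\bw} Z^k_{\bw}$ using the explicit eigenvectors of Lemma \ref{lem:eigen-sigma}. Since $V_{\bw}$ is unitary, the identity (\ref{xy}) is equivalent to matching the coefficients of $v_{d,\bw}$ separately in the $X$-block and the $Y$-block, for $d=1,2,3$. This splits into three decoupled systems.
\begin{enumerate}
\item The $v_{1,\bw}$ component only receives contributions from $Z^1_{\bw}$ (in $Y$) and $Z^2_{\bw}$ (in $X$). Hence $c_{1,\bw}=b_{1,\bw}$ and $c_{2,\bw}=a_{1,\bw}$.
\item The $v_{2,\bw}$ component receives contributions only from $Z^3_{\bw}$ and $Z^5_{\bw}$, giving
\[
\lambda^+ c_{3,\bw}+\lambda^- c_{5,\bw}=a_{2,\bw},\qquad -i|\bw|(c_{3,\bw}+c_{5,\bw})=b_{2,\bw}.
\]
\item The $v_{3,\bw}$ component receives contributions only from $Z^4_{\bw}$ and $Z^6_{\bw}$, giving
\[
\lambda^+ c_{4,\bw}+\lambda^- c_{6,\bw}=a_{3,\bw},\qquad i|\bw|(c_{4,\bw}+c_{6,\bw})=b_{3,\bw}.
\]
\end{enumerate}

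Each $2\times2$ system is solved by substituting $c_{5,\bw}=ib_{2,\bw}/|\bw|-c_{3,\bw}$, respectively $c_{6,\bw}=-ib_{3,\bw}/|\bw|-c_{4,\bw}$, into the first equation. For the $v_{2,\bw}$ system this gives
\[
c_{3,\bw}(\lambda^+-\lambda^-)=a_{2,\bw}-i\lambda^- b_{2,\bw}/|\bw|,
\]
which is the stated formula for $c_{3,\bw}$, and back-substitution yields $c_{5,\bw}$. The $v_{3,\bw}$ system is handled the same way, with the sign of $b_{3,\bw}$ flipped, and produces $c_{4,\bw}$ and $c_{6,\bw}$ as displayed in (\ref{ck}).

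Unique solvability of every block shows that $\{Z^k_{\bw}\}_{k=1}^6$ is a basis of $L^{\bw}$. The only delicate point is the nonvanishing determinant $\pm i|\bw|(\lambda^+-\lambda^-)$, which is exactly where $\sigma\neq2|\bw|$ is used. This is also the step I expect to need the most care, mainly in keeping track of signs and of the conventions for $\lambda^\pm$.

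Finally, by Lemma \ref{prop:base_sol} each $Z^k_{\bw}e^{i\lambda_{k,\bw}t}$ solves (\ref{eq:D_complex}). By linearity the finite sum (\ref{Z_w_general}) is also a solution, and its value at $t=0$ is $\sum_k c_{k,\bw}Z^k_{\bw}=Z_{0,\bw}$. This is precisely the content of Corollary \ref{prop:solution_Lw} applied with the linearly independent family just verified.
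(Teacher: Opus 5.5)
Your proposal is correct and follows the paper's route. The paper cites Corollary \ref{prop:solution_Lw} (via Lemma \ref{prop:base_sol}) for the evolution equation and says the initial condition can be checked directly; your block-wise coefficient matching carries out that check and reproduces all six formulas in (\ref{ck}). Your determinants $\pm i|\bw|(\lambda^+-\lambda^-)$, with the assumption $|\bw|>0$, also justify the basis claim that the paper only asserts.
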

where $\lambda_{k,\bw} $ are defined by Eq. (\ref{def:lambda_12}-\ref{def:lambda_56}). 
\begin{proof}
By Corollary \ref{prop:solution_Lw} ,  $Z_{\bw}(\bx, t)$ solve Eq. (\ref{eq:D_complex}).  One can directly verify $Z_{\bw}(\bx, 0) = Z_{0,\bw}(\bx) $.
\end{proof}
\subsection{The derivation of solutions in $L^{\bw}$ under  $\sigma=2|\bw|$} \label{subsection-special}
A special treatment is needed in the case $2|\bw|=\sigma$. The representation (\ref{xy}) of $Z_{0,\bw}(\bx, t)$ is not available since $\{Z^k_{\bw}\}^6_{k=1}$ no longer form a base for $L^{\bw}$.  We shall  work on $Z_{\bw}(\bx, t)$ for $\sigma<2|\bw|$ and show $\lim_{\sigma\to 2|\bw|^-}Z_{\bw}(\bx, t)$ converges to the desired solution.  In this section,  we adopt the notation $Z_{\bw}(\bx, t; \sigma)$ to specify the solution with a given $\sigma< 2|\bw|$ and we aim to show 

\begin{equation}
Z_{\bw}(\bx, t; 2|\bw|) :=  \lim_{\sigma\to 2|\bw|^-}Z_{\bw}(\bx, t; \sigma) 
\end{equation}
exists and solves Eq (\ref{eq:D}) with the initial condition defined by(\ref{eq:init_general_sigma}-\ref{eq:Xw_Yw}).
Define
\[
\eps_{\bw} = \sqrt{|\bw|^2-\frac{\sigma^2}4}, \quad \sigma_{n,\bw}=\frac{\sigma}{2|\omega|}, \quad \lambda^+_{n,\bw} = \frac{i\lambda^+_{\bw}}{|\bw|}, \quad \lambda^-_{n,\bw} = \frac{i\lambda^-_{\bw}}{|\bw|}.
\]
It is clear
\[
\lim_{\sigma\to 2|\bw|} \epsilon_{\bw} = 0, \lim_{\sigma\to 2|\bw|} \sigma_{n,\bw}= 1, \quad  \lim_{\sigma\to 2|\bw|} \lambda^+_{n,\bw} =\lim_{\sigma\to 2|\bw|} \lambda^-_{n,\bw}= -1.
\]
By definition (\ref{ck}),
\begin{eqnarray*}
	c_{3,\bw} + c_{5,\bw} &=& i\frac{b_{2,\bw}}{|\bw|}, \quad  c_{3,\bw} - c_{5,\bw} = \frac{a_{2,\bw} + b_{2,\bw}\sigma_n}{\eps}. \label{c_3_p_5}\\
	c_{4,\bw} + c_{6,\bw} &=& -i\frac{b_{3,\bw}}{|\bw|}, \quad  c_{4,\bw} - c_{6,\bw} = \frac{a_{3,\bw} - b_{3,\bw}\sigma_n}{\eps}. \label{c_4_p_6}\\
	Z^3_{\bw}-Z^5_{\bw} &=& 2\eps_{\bw}e^{i\bx\cdot\bw}\begin{pmatrix}
		v_{2,\bw} \\
		0
	\end{pmatrix}, \quad Z^4_{\bw}-Z^6_{\bw} = 2\eps_{\bw}e^{i\bx\cdot\bw}\begin{pmatrix}
		v_{3,\bw} \\
		0
	\end{pmatrix}.
\end{eqnarray*}
We shall work on the six components in Eq (\ref{Z_w_general}) for $Z_{\bw}(\bx, t; \sigma)$ separately. Let
\[
Z_{\bw}(\bx, t; \sigma) = P_{1,\bw; \sigma } + P_{2,\bw; \sigma} + P_{3,\bw; \sigma},
\]
where
\begin{eqnarray*}
P_{1,\bw; \sigma } &=& c_{1,\bw}  Z^1_{\bw} e^{i \lambda_{1,\bw} t} + c_{2,\bw}  Z^2_{\bw} e^{i \lambda_{2,\bw} t}\\
	P_{2,\bw; \sigma}&=&c_{3,\bw}  Z^3_{\bw} e^{i \lambda_{3,\bw} t} + c_{5,\bw}  Z^5_{\bw} e^{i \lambda_{5,\bw} t}, \nonumber\\ 
	P_{3,\bw; \sigma}&=&c_{4,\bw}  Z^4_{\bw} e^{i \lambda_{4,\bw} t} + c_{6,\bw}  Z^6_{\bw} e^{i \lambda_{6,\bw} t}. \nonumber
\end{eqnarray*}
 First of all, $P_{1,\bw; \sigma }$ is well-defined if $\sigma=2|\bw|$ and we have
\begin{equation}\label{prop:sol_12}
	P_{1,\bw; \sigma}= 
	\begin{pmatrix}
		a_{1,\bw}w e^{i\bx \cdot \bw -\sigma t}\\
		b_{1,\bw}w e^{i\bx \cdot \bw}
	\end{pmatrix}=:\begin{pmatrix}
		E_{1,\bw}\\
		H_{1,\bw}
	\end{pmatrix}
\end{equation} 
It turns out that $P_{2,\bw; \sigma}, P_{3,\bw; \sigma}$ converge as $\sigma\to 2|\bw|$ as follows and the proof is refereed to Appendix \ref{app-lem-limit-35-46}.
\begin{lem}\label{lem-limit-35-46}
	We have
	\begin{eqnarray}
		\lim_{\sigma\to 2|\bw| }P_{2,\bw; \sigma}&=&
		e^{- |\bw| t  +i\bx\cdot \bw} 
		\begin{pmatrix}
			(a_{2,\bw} - |\bw|(a_{2,\bw} + b_{2,\bw}) t)  v_{2,\bw}\\
			(b_{2,\bw} + |\bw|(a_{2,\bw} + b_{2,\bw}) t) v_{2,\bw}
		\end{pmatrix} := \begin{pmatrix}
			E_{2,\bw}\\
			H_{2,\bw}
		\end{pmatrix} \label{sol_35} \\
		\lim_{\sigma\to 2|\bw| }P_{3,\bw; \sigma} &=&e^{-|\bw|t+i\bx \cdot \bw}\begin{pmatrix}
			(a_{3,\bw}- |\bw|(a_{3,\bw}- b_{3,\bw})t)v_{3,\bw}\\
			(b_{3,\bw}- |\bw|(a_{3,\bw}- b_{3,\bw})t)v_{3,\bw}
		\end{pmatrix} :=   \begin{pmatrix}
			E_{3,\bw}\\
			H_{3,\bw}
		\end{pmatrix} \label{sol_46}
	\end{eqnarray} 
\end{lem}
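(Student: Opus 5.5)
The plan is to take the limit of each pair directly, using the closed forms of Lemma \ref{lem:eigen-sigma} and the coefficients (\ref{ck}). The divergent factor $1/\eps_{\bw}$ in $c_{3,\bw}-c_{5,\bw}$ (and in $c_{4,\bw}-c_{6,\bw}$) must be cancelled by a factor that vanishes like $\eps_{\bw}$. The device that exposes this cancellation is the sum/difference identity
\[
c_3 Z^3 e^{i\lambda_3 t}+c_5 Z^5 e^{i\lambda_5 t}=\tfrac12 (c_3+c_5)\bigl(Z^3e^{i\lambda_3 t}+Z^5e^{i\lambda_5 t}\bigr)+\tfrac12 (c_3-c_5)\bigl(Z^3e^{i\lambda_3 t}-Z^5e^{i\lambda_5 t}\bigr),
\]
where the index $\bw$ is suppressed. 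For $\sigma<2|\bw|$ we write $\lambda^\pm_{\bw}=\tfrac{i\sigma}{2}\pm\eps_{\bw}$ with $\eps_{\bw}>0$ real. Then $e^{i\lambda^\pm_{\bw} t}=e^{-\sigma t/2}e^{\pm i\eps_{\bw} t}$, and the common prefactor $e^{-\sigma t/2}\to e^{-|\bw|t}$ factors out of every term.

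For $P_{2,\bw;\sigma}$, Lemma \ref{lem:eigen-sigma} shows that every block of $Z^3_{\bw}$ and $Z^5_{\bw}$ is a multiple of $v_{2,\bw}$, so everything reduces to scalar functions. The sum part has first component $v_{2,\bw}e^{-\sigma t/2}\bigl(\lambda^+e^{i\eps t}+\lambda^-e^{-i\eps t}\bigr)$ and second component $-i|\bw|v_{2,\bw}e^{-\sigma t/2}\bigl(e^{i\eps t}+e^{-i\eps t}\bigr)$. Both are bounded and converge, the first to $-2|\bw|e^{-|\bw|t}v_{2,\bw}$ up to the factor $i$ coming from $\lambda^\pm\to i|\bw|$. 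Since $c_3+c_5=ib_{2,\bw}/|\bw|$ is independent of $\eps$, this part has an obvious limit.

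The difference part is the heart of the argument. Expanding gives
\[
\lambda^+e^{i\eps t}-\lambda^-e^{-i\eps t}=-\sigma\sin(\eps t)+2\eps\cos(\eps t) \quad\text{and}\quad e^{i\eps t}-e^{-i\eps t}=2i\sin(\eps t).
\]
Both are $O(\eps)$ and, after division by $\eps$, tend to $2-2|\bw|t$ and $2it$ respectively. Multiplying by $c_3-c_5=(a_{2,\bw}+b_{2,\bw}\sigma_n)/\eps$ (so that $\eps(c_3-c_5)\to a_{2,\bw}+b_{2,\bw}$) yields a finite limit that is linear in $t$. Adding the two parts should then give exactly the expression (\ref{sol_35}).

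The same computation with $v_{3,\bw}$, the sign of $Y^4,Y^6$ flipped to $+i|\bw|$, and $c_4\pm c_6$ from the displayed identities gives (\ref{sol_46}). There the combination $a_{3,\bw}-b_{3,\bw}$ replaces $a_{2,\bw}+b_{2,\bw}$.

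The main obstacle I expect is bookkeeping rather than analysis: the signs and factors of $i$ in $c_3\pm c_5$ (which involve $\lambda^\pm$ and the normalisation $\lambda^\pm_{n,\bw}$) must match the stated closed forms exactly. I would therefore first recheck $c_3\pm c_5$ and $c_4\pm c_6$ from (\ref{ck}) using $\lambda^+-\lambda^-=2\eps$ and $\lambda^++\lambda^-=i\sigma$, and only then carry out the limits.

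Finally, I would remark that the limit is locally uniform in $t$, together with its $t$-derivative, since all the functions involved are entire in $\eps$. Hence the limit still satisfies Eq. (\ref{eq:D_complex}) at $\sigma=2|\bw|$ with the correct initial value. Alternatively, one can verify this directly by differentiating (\ref{sol_35})–(\ref{sol_46}).
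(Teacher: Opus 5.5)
Your proposal is correct and is essentially the paper's argument. The paper also cancels the $1/\eps_{\bw}$ in $c_{3,\bw}-c_{5,\bw}$ against an $O(\eps_{\bw})$ factor and then uses $\sin(\eps_{\bw}t)/\eps_{\bw}\to t$; it just organises the split differently, writing $Z^3_{\bw}=Z^5_{\bw}+2\eps_{\bw}e^{i\bx\cdot\bw}(v_{2,\bw}^T,0)^T$ and expanding $c_{3,\bw}e^{i\eps_{\bw}t}+c_{5,\bw}e^{-i\eps_{\bw}t}=(c_{3,\bw}+c_{5,\bw})\cos(\eps_{\bw}t)+i(c_{3,\bw}-c_{5,\bw})\sin(\eps_{\bw}t)$ instead of your symmetric sum/difference split.

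Carrying out your final addition does give (\ref{sol_35}), as coefficients of $e^{i\bx\cdot\bw}v_{2,\bw}$: the sum part tends to $e^{-|\bw|t}(-b_{2,\bw},\,b_{2,\bw})$ and the difference part to $e^{-|\bw|t}\bigl((a_{2,\bw}+b_{2,\bw})(1-|\bw|t),\,|\bw|(a_{2,\bw}+b_{2,\bw})t\bigr)$. One correction: the first block of $Z^3_{\bw}e^{i\lambda_{3,\bw}t}+Z^5_{\bw}e^{i\lambda_{5,\bw}t}$ tends to $2i|\bw|e^{-|\bw|t}v_{2,\bw}$, so your ``$-2|\bw|$ up to the factor $i$'' should be replaced by that. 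The same bookkeeping with $a_{3,\bw}-b_{3,\bw}$ then gives (\ref{sol_46}).
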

One can directly verify that  $\{E_{2,\bw}, H_{2,\bw}\}$ and $\{E_{3,\bw}, H_{3,\bw}\}$ both solve (\ref{eq:D}) as stated below.
\begin{prop}\label{prop:special_sigma}
	Let $\{(E_{d,\bw}, H_{d,\bw})\}_{d=1}^3$ be defined by Eqs. (\ref{prop:sol_12}-\ref{sol_46}).  Then
	\begin{equation}\label{prop:sol_psi}
		\frac{\partial E_{d,\bw}}{\partial t} = -\sigma E_{d,\bw} + \CU (H_{d,\bw}), \quad \frac{\partial H_{d,\bw}}{\partial t} = -\CU (E_{d,\bw}), \quad d \in \{1,2,3\}
	\end{equation}
	and therefore 
	\begin{equation}\label{eq:Z_bw_special}
	Z_{\bw}(\bx,t; 2|\bw|) = \sum_{1\le d\le 3}  \begin{pmatrix}
		E_{d,\bw}\\
		H_{d,\bw}
	\end{pmatrix}
	\end{equation}
	is the solution of (\ref{eq:D_complex}) with the initial condition (\ref{eq:init_general_sigma}-\ref{eq:Xw_Yw}). 
\end{prop}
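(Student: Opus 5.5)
The plan is a direct verification, carried out separately for $d=1,2,3$, using only how $\CU$ acts on the basis $\{v_{d,\bw}e^{i\bx\cdot\bw}\}$. For any scalar function $f(t)$, the definition of $\Phi_{\bw}$ together with Eq.~(\ref{eq:Phi_w_case}) gives
\[
\CU\big(f(t)v_{d,\bw}e^{i\bx\cdot\bw}\big)=i f(t)\,\Phi_{\bw}v_{d,\bw}\,e^{i\bx\cdot\bw}
=\begin{cases}0, & d=1,\\ -|\bw| f(t)v_{2,\bw}e^{i\bx\cdot\bw}, & d=2,\\ |\bw| f(t)v_{3,\bw}e^{i\bx\cdot\bw}, & d=3.\end{cases}
\]
Each pair $(E_{d,\bw},H_{d,\bw})$ is a scalar function of $t$ times $v_{d,\bw}e^{i\bx\cdot\bw}$. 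Hence (\ref{prop:sol_psi}) reduces, for each $d$, to two scalar ODE identities in $t$, and we impose $\sigma=2|\bw|$ throughout.

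For $d=1$, the field $H_{1,\bw}$ is constant in $t$ and $\CU E_{1,\bw}=0$, so the second equation holds. The first equation reads $\partial_t E_{1,\bw}=-\sigma E_{1,\bw}$, which holds because of the factor $e^{-\sigma t}$. Here I read the vector in (\ref{prop:sol_12}) as $v_{1,\bw}=\bw/|\bw|$; this is consistent with $c_{1}=b_{1}$, $c_2=a_1$ and $Z^1_{\bw},Z^2_{\bw}$ in Lemma \ref{lem:eigen-sigma}, and the scaling does not affect the equation in any case.

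For $d=2$, write $g=e^{-|\bw|t+i\bx\cdot\bw}$, $E=(a-|\bw|(a+b)t)v_{2,\bw}g$ and $H=(b+|\bw|(a+b)t)v_{2,\bw}g$. Differentiating gives
\[
\partial_t E=\big(-2|\bw|a-|\bw|b+|\bw|^2(a+b)t\big)v_{2,\bw}g .
\]
This coincides with $-2|\bw|E+\CU H=(-2|\bw|E-|\bw|H)$ expanded in the same way. Likewise, $\partial_t H=(|\bw|a-|\bw|^2(a+b)t)v_{2,\bw}g=|\bw|E=-\CU E$.

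For $d=3$ the sign of the curl eigenvalue flips, so $\CU H=|\bw|H$ and $\CU E=|\bw|E$. With $E=(a-|\bw|(a-b)t)v_{3,\bw}g$ and $H=(b-|\bw|(a-b)t)v_{3,\bw}g$, the same expansion gives
\[
\partial_tE=\big(-2|\bw|a+|\bw|b+|\bw|^2(a-b)t\big)v_{3,\bw}g=-2|\bw|E+|\bw|H,
\]
and also $\partial_tH=(-|\bw|a+|\bw|^2(a-b)t)v_{3,\bw}g=-|\bw|E$. The $t$-coefficients match because the terms from differentiating the exponential combine with the $-2|\bw|=-\sigma$ damping term.

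Summing over $d$, linearity shows that $Z_{\bw}(\bx,t;2|\bw|)$ solves (\ref{eq:D_complex}). At $t=0$ the polynomial factors reduce to $a_{d,\bw}$ and $b_{d,\bw}$. Therefore the sum equals $\big(\sum_d a_{d,\bw}v_{d,\bw},\ \sum_d b_{d,\bw}v_{d,\bw}\big)^T e^{i\bx\cdot\bw}=(X_{\bw},Y_{\bw})^Te^{i\bx\cdot\bw}$, which is exactly the initial condition (\ref{eq:init_general_sigma})--(\ref{eq:Xw_Yw}).

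The only delicate point is bookkeeping rather than analysis. One has to use the correct sign of the curl eigenvalue on $v_{2,\bw}$ versus $v_{3,\bw}$, which is what produces the $a+b$ versus $a-b$ combinations in Lemma \ref{lem-limit-35-46}. One also has to interpret $P_{1,\bw;\sigma}$ with $v_{1,\bw}$ so that the initial data match. Uniqueness of solutions of the finite-dimensional linear ODE on $L^{\bw}$ then identifies this sum with the limit of $Z_{\bw}(\bx,t;\sigma)$ as the solution.
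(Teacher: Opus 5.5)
Your proposal is correct and follows essentially the same route as the paper: a direct componentwise verification with $\sigma=2|\bw|$, using $\CU(v_{d,\bw}e^{i\bx\cdot\bw})=\mu_{d,\bw}v_{d,\bw}e^{i\bx\cdot\bw}$. The only difference in organization is that the paper routes the algebra through the combinations $E_{2,\bw}+H_{2,\bw}$ and $E_{3,\bw}-H_{3,\bw}$, while you expand the coefficients directly.

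Two of your choices improve on the paper's written proof. Your check of the initial condition by evaluating at $t=0$, reading $v_{1,\bw}$ in place of $\bw$ in (\ref{prop:sol_12}), is cleaner than the paper's appeal to the limit in $\sigma$. Your computations also avoid the index and sign slips in the paper's version, for example $\partial_t H_{2,\bw}=-\CU(E_{2,\bw})$ rather than $-\CU(H_{2,\bw})$.
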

The details of proof is refereed to Appendix \ref{app-prop:special_sigma}.  
\begin{ex}\label{ex:parallel_field}
Note that the structure of $Z_{\bw}(\bx,t;2|\bw|)$ is different from other base solutions with $\sigma\neq 2|\bw|$ and it contains a linear component with respect to $t$. In this example, we formulate the real components of $E_{\psi,\bw},H_{\psi,\bw}$ and demonstrate how to construct electromagnetic fields with certain properties as an application.  Let
\[
E_{d,\bw} = E^R_{d,\bw} + E^I_{d,\bw}i,\quad H_{d,\bw} = H^R_{d,\bw} + H^I_{d,\bw}i
\]
By definition \ref{prop:sol_12}-\ref{sol_46} and some algebraic operations, one obtains
\begin{eqnarray}
	E^R_{1,\bw} &=& (a^R_{1,\bw} \cos(\bx\cdot \bw) - a^I_{1,\bw} \sin(\bx\cdot \bw))|\bw| e^{-2|\bw|t}   \label{E_real_12}\\
	H^R_{1,\bw} &=& (b^R_{1,\bw} \cos(\bx\cdot \bw)  -  b^I_{1,\bw} \sin(\bx\cdot \bw))  |\bw|. \label{H_real_12} \\
	E^R_{d,\bw}  &=& e^{-|\bw|t} \{(A^R_{d,\bw} \cos(\bx\cdot \bw)  - A^I_{d,\bw} \sin(\bx\cdot \bw))R_{\bw} \quad \phi \in \{1,34\} \nonumber\\ 
	&+&  (-A^R_{d,\bw} \sin(\bx\cdot \bw)  - A^I_{d,\bw} \cos(\bx\cdot \bw))I_{\bw} \} \label{E_real_psi}  \\
	H^R_{d,\bw} &=&  e^{-|\bw|t} \{ (B^R_{d,\bw} \cos(\bx\cdot \bw)  - B^I_{d,\bw} \sin(\bx\cdot \bw))R_{\bw} \quad \phi \in \{1,34\}\nonumber\\
	&+&  (-B^R_{d,\bw} \sin(\bx\cdot \bw)  - B^I_{d,\bw} \cos(\bx\cdot \bw))I_{\bw}\} \label{H_real_psi}
\end{eqnarray}
where
\begin{eqnarray*}
	A^R_{2} = a^R_{2,\bw} - t|\bw|(a^R_{2,\bw} + b^R_{2,\bw}), \quad A^I_{2} = a^I_{2,\bw} - t|\bw|(a^I_{2,\bw} + b^I_{2,\bw})\\
	B^R_{2} = b^R_{2,\bw} + t|\bw|(a^R_{2,\bw} + b^R_{2,\bw}), \quad B^I_{2} = b^I_{2,\bw} + t|\bw|(a^I_{2,\bw} + b^I_{2,\bw})\\
	A^R_{3} = a^R_{3,\bw} - t|\bw|(a^R_{3,\bw} - b^R_{3,\bw}), \quad A^I_{3} = a^I_{3,\bw} - t|\bw|(a^I_{3,\bw} - b^I_{3,\bw})\\
	B^R_{3} = b^R_{3,\bw} - t|\bw|(a^R_{3,\bw} - b^R_{3,\bw}), \quad B^I_{3} = b^I_{3,\bw} - t|\bw|(a^I_{3,\bw} - b^I_{3,\bw}).
\end{eqnarray*}
As an application,  we use the analytic expression (\ref{E_real_12}-\ref{H_real_psi}) to construct parallel electromagnetic fields.  
\begin{prop}\label{prop:application-parallel}
	For $d\in \{1,2,\}$,  assume that  $(E^R_{d,\bw},  H^R_{d,\bw})$ are not zero fields. Then they are parallel to each other if and only if
	\begin{equation}\label{iff_condition}
		a^I_{d} b^R_{d}= a^R_{d} b^I_{d}.
	\end{equation}
\end{prop}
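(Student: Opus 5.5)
The plan is to reduce parallelism of the two real fields to a statement about two complex scalars, using that each pair $(E^R_{d,\bw},H^R_{d,\bw})$ is the real part of a complex multiple of one fixed complex vector. I read "parallel" as $E^R_{d,\bw}(\bx,t)\parallel H^R_{d,\bw}(\bx,t)$ at every $(\bx,t)$, or equivalently (as will come out) as the two fields being real multiples of each other. Write $\theta=\bx\cdot\bw$.

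\emph{Case $d=1$.} By (\ref{prop:sol_12}), $E_{1,\bw}$ and $H_{1,\bw}$ are both complex scalar multiples of the real vector $\bw$. Their real parts are therefore $\mathrm{Re}(\alpha)$ and $\mathrm{Re}(\beta)$ times a positive multiple of $\bw$, where
\[
\alpha=a_{1,\bw}e^{i\theta}, \qquad \beta=b_{1,\bw}e^{i\theta},
\]
and the decaying factor $e^{-2|\bw|t}$ is positive and irrelevant. In this case pointwise parallelism is automatic, so the content of the statement must be that the ratio is a constant independent of $\bx$, i.e.\ the fields are real multiples of each other. Since $\theta$ ranges over all of $\mathbb{R}$, the condition is that $\mathrm{Re}(a e^{i\theta})$ and $\mathrm{Re}(b e^{i\theta})$ are proportional as functions of $\theta$. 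I will show this holds iff $a,b$ are real-proportional. For this, expand
\[
\mathrm{Re}(a e^{i\theta})=a^R\cos\theta-a^I\sin\theta,
\]
and use the linear independence of $\cos$ and $\sin$. This gives $(a^R,a^I)\parallel(b^R,b^I)$, which is exactly $a^Ib^R=a^Rb^I$.

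\emph{Case $d=2$.} By (\ref{E_real_psi})--(\ref{H_real_psi}), $E^R_{2,\bw}=e^{-|\bw|t}\,\mathrm{Re}(\alpha\, v_{2,\bw})$ with $\alpha=A_2e^{i\theta}$ and $A_2=A^R_2+iA^I_2$, and likewise $H^R_{2,\bw}$ with $\beta=B_2e^{i\theta}$. Since $v_{2,\bw}=R_{\bw}+iI_{\bw}$, we have
\[
\mathrm{Re}(\alpha v_{2,\bw})=\mathrm{Re}(\alpha)R_{\bw}-\mathrm{Im}(\alpha)I_{\bw}.
\]
The map $\alpha\mapsto \mathrm{Re}(\alpha)R_{\bw}-\mathrm{Im}(\alpha)I_{\bw}$ is an $\mathbb{R}$-linear isomorphism from $\mathbb{C}$ onto $\mathrm{span}(R_{\bw},I_{\bw})$, because $R_{\bw}\perp I_{\bw}$ and $|R_{\bw}|=|I_{\bw}|=1/\sqrt2$ by the Remark after Lemma \ref{prop:curl-2}. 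Hence the two real vectors are parallel iff $\alpha$ and $\beta$ are $\mathbb{R}$-linearly dependent, i.e.
\[
\mathrm{Im}(\alpha\bar\beta)=\mathrm{Im}(A_2\bar B_2)=0.
\]
The phase $e^{i\theta}$ cancels, so the condition does not depend on $\bx$.

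The main step is to check that $\mathrm{Im}(A_2\bar B_2)$ does not depend on $t$ either. Put $s=t|\bw|$, $a=a_{2,\bw}$, $b=b_{2,\bw}$, so that $A_2=a-s(a+b)$ and $B_2=b+s(a+b)$. Expanding,
\[
A_2\bar B_2=a\bar b+s\bigl(|a|^2-|b|^2\bigr)-s^2|a+b|^2 .
\]
The $s$ and $s^2$ coefficients are real, so $\mathrm{Im}(A_2\bar B_2)=\mathrm{Im}(a\bar b)=a^Ib^R-a^Rb^I$ for all $t$. Thus parallelism at one time is equivalent to parallelism at all times, and to (\ref{iff_condition}).

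The nonvanishing hypothesis excludes the degenerate case where one field is identically zero, in which "parallel" would be vacuous. I expect the only delicate point to be pinning down the meaning of "parallel" in case $d=1$ (pointwise versus proportional with constant ratio). The time-independence computation is the real content, but it is short once written as above. The same argument applies verbatim to $d=3$, where $A_3\bar B_3$ has real $s$-coefficient $-|a|^2-|b|^2$ and real $s^2$-coefficient $|a-b|^2$, so $\mathrm{Im}(A_3\bar B_3)=\mathrm{Im}(a\bar b)$ as well.
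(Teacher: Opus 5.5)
Your argument is correct. Its first step matches the paper's: both use the frame $(R_{\bw},I_{\bw})$ to turn parallelism of $E^R_{2,\bw},H^R_{2,\bw}$ into $\mathbb{R}$-linear dependence of the complex amplitudes. The paper writes this as $B^R_2=cA^R_2$, $B^I_2=cA^I_2$ with a real, possibly $(\bx,t)$-dependent, $c$.

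From there the routes diverge.
\begin{itemize}
\item The paper proves only necessity. It eliminates $c$ and $t$ by a case split ($c=-1$; $ca^I_{2,\bw}-b^I_{2,\bw}$ zero or not) followed by cross-multiplication, and its first case tacitly needs $t\neq 0$. Sufficiency is left to a ``direct check''.
\item You instead find the invariant $\mathrm{Im}(A_2\bar B_2)=\mathrm{Im}(a_{2,\bw}\bar b_{2,\bw})$. This gives both directions at once and shows that collinearity at a single $(\bx,t)$ already forces (\ref{iff_condition}).
\item Your symmetric formulation also correctly handles the instant where $A_2(t)=0$, for example $a_{2,\bw}=1$, $b_{2,\bw}=0$, $t=1/|\bw|$, where $E^R_{2,\bw}\equiv 0\not\equiv H^R_{2,\bw}$. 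The paper's convention $H^R_2=c(\bx,t)E^R_2$ fails there in the sufficiency direction.
\end{itemize}

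For $d=1$, the paper's ``same argument'' does not literally transfer, because both fields lie along $\bw$ with a single scalar coefficient. You observe that pointwise collinearity is then automatic and switch to an $\bx$-independent ratio, settled by the independence of $\cos$ and $\sin$. That is a sensible repair. Under the paper's convention one could instead compare the zero sets of the two fields.

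One small slip: for $d=3$ the coefficient of $s$ in $A_3\bar B_3$ is $|b|^2-|a|^2$, not $-|a|^2-|b|^2$. It is still real, so your conclusion there is unaffected.
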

The details of proof is referred to Appendix \ref{app-prop-application-parallel}.
\end{ex}

\subsection{The analytic solutions  with general initial conditions} \label{subsection-all_together}
The solutions on $L^{\bw}$ developed in Subsections \ref{subsection-general} and \ref{subsection-special} can be used to construct the solution with a general initial condition $Z_0(\bx)$ by the same arguments as shown in Theorem \ref{thm-main}.

\begin{theo}\label{thm-2} Assume that $\bS=\sigma I_3$ for some positive $\sigma$.
Let $Z_0(\bx)$ be a initial condition with its Fourier series defined in \ref{eq:complex_init} such that Inequity (\ref{abs_sum_2}) holds. For each $\bw$ in the summation \ref{eq:complex_init},  set $Z_{\bw}(\bx,t)$ by Eq. (\ref{Z_w_general}) if $|\bw|\neq \sigma/2$; otherwise let $Z_{\bw}(\bx,t)$ be $Z_{\bw}(\bx,t; 2|\bw|)$ by Eq. (\ref{eq:Z_bw_special}).
Then $Z(\bx, t)$ defined in Eq. (\ref{label:sol_allcase}) solves Eq. \ref{eq:D_complex}-\ref{eq:D_complex_init} and the real component $Z^R(\bx, t)$ of $Z(\bx, t)$ solves Eq. (\ref{eq:D}-\ref{eq:D-init}) .
\end{theo}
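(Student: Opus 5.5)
The argument follows the proof of Theorem \ref{thm-main}: each Fourier mode is solved separately on its base space $L^{\bw}$, and the modes are then summed. The one new ingredient is the exceptional frequency $|\bw|=\sigma/2$, which Corollary \ref{prop:solution_Lw} cannot handle because the eigenvectors $\{Z^k_{\bw}\}$ fail to span $L^{\bw}$ there. I would proceed in three steps.

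\emph{Step 1 (generic modes).} For every $\bw$ in the summation (\ref{eq:complex_init}) with $|\bw|\neq\sigma/2$, Lemma \ref{lem:eigen-sigma} gives six eigenvectors. I would check that they are linearly independent. The vectors $Z^1_{\bw}$ and $Z^2_{\bw}$ are independent of the rest since they live in the $v_{1,\bw}$ directions. Within the $v_{2,\bw}$ block, $Z^3_{\bw}$ and $Z^5_{\bw}$ are independent because
\[
\det\begin{pmatrix}\lambda^+ & \lambda^-\\ -i|\bw| & -i|\bw|\end{pmatrix} = -i|\bw|(\lambda^+-\lambda^-) = -i|\bw|\sqrt{4|\bw|^2-\sigma^2}\neq 0 .
\]
The $v_{3,\bw}$ block is handled the same way. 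Hence Corollary \ref{prop:solution_Lw}, made explicit in Proposition \ref{prop:summary-general}, shows that $Z_{\bw}(\bx,t)$ from (\ref{Z_w_general}) solves (\ref{eq:D_complex}) with initial value $Z_{0,\bw}$. The case $\bw=0$, if it occurs, is trivial: $\Phi_0=0$, so the solution is simply $(X e^{-\sigma t}, Y)$.

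\emph{Step 2 (exceptional modes).} For $|\bw|=\sigma/2$, Proposition \ref{prop:special_sigma} states that $Z_{\bw}(\bx,t;2|\bw|)$ solves (\ref{eq:D_complex}). Its initial value is read off from (\ref{prop:sol_12}) and Lemma \ref{lem-limit-35-46} at $t=0$, namely $\sum_d (a_{d,\bw}v_{d,\bw}, b_{d,\bw}v_{d,\bw})^T e^{i\bx\cdot\bw}=Z_{0,\bw}$. One should double-check the normalization in (\ref{prop:sol_12}): it writes $w$ where $v_{1,\bw}$ is meant. This is a notational point only.

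\emph{Step 3 (summation).} Superposition gives that $Z=\sum_{\bw}Z_{\bw}$ formally solves (\ref{eq:D_complex}) with $Z(\bx,0)=\sum_{\bw}Z_{0,\bw}=Z_0$. The real part then solves (\ref{eq:D})–(\ref{eq:D-init}), because $\BB$ is a real operator and the real part of $Z_0$ is $(E_0,H_0)$.

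The main obstacle is justifying termwise differentiation in $t$ and termwise application of $\CU$, i.e.\ spatial derivatives. For this I would prove a uniform bound
\[
|Z_{\bw}(\bx,t)|+|\partial_t Z_{\bw}(\bx,t)|+|\CU Z_{\bw}(\bx,t)| \le C(1+|\bw|)(1+t)\sum_d\big(|a_{d,\bw}|+|b_{d,\bw}|\big)
\]
for $t$ in compact sets, with $C$ independent of $\bw$. Then (\ref{abs_sum_2}) gives uniform convergence of all differentiated series, by the Weierstrass M-test, and differentiation passes through the sum.

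In the generic case every exponential factor has modulus at most $1$, since $\mathrm{Im}\,\lambda_{k,\bw}\ge0$. The difficulty is that the coefficients $c_{k,\bw}$ in (\ref{ck}) blow up like $1/\sqrt{4|\bw|^2-\sigma^2}$ as $|\bw|\to\sigma/2$. For the finitely many $\bw$ near $\sigma/2$ I would therefore regroup the pair terms $P_{2,\bw;\sigma}$ and $P_{3,\bw;\sigma}$ as divided differences,
\[
\frac{e^{i\lambda^+t}-e^{i\lambda^-t}}{\lambda^+-\lambda^-},
\]
which are bounded by $t$ uniformly. This is exactly the mechanism behind Lemma \ref{lem-limit-35-46}. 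For large $|\bw|$ the factor $\sqrt{4|\bw|^2-\sigma^2}$ is comparable to $|\bw|$, so the coefficients are $O(|a|+|b|)$. The components also grow at most like $|\lambda^\pm|\sim|\bw|$, and the derivatives contribute one further factor $|\bw|$. Only finitely many $\bw$ lie near $\sigma/2$ in a lattice-type index set, so these cases only change the constant $C$.
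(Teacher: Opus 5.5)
Your proposal is correct and takes the paper's route. The paper proves Theorem~\ref{thm-2} only by referring back to the argument of Theorem~\ref{thm-main}: each mode $Z_{\bw}$ solves (\ref{eq:D_complex}), here by Proposition~\ref{prop:summary-general} or Proposition~\ref{prop:special_sigma}, and (\ref{abs_sum_2}) permits termwise differentiation; you add the eigenvector independence, the $\bw=0$ mode and an explicit uniform bound that the paper leaves implicit. That bound is right, but your bookkeeping as written would give a factor $|\bw|^2$: for large $|\bw|$ one actually has $c_{k,\bw}=O\big((|a|+|b|)/|\bw|\big)$, which cancels the growth $|\lambda^{\pm}|=|\bw|$ of $Z^k_{\bw}$ (alternatively, the coefficient vectors $(X(t),Y(t))$ of $Z_{\bw}$ satisfy $\frac{d}{dt}\big(|X|^2+|Y|^2\big)=-2\sigma|X|^2\le 0$, which gives $|Z_{\bw}(\bx,t)|\le|Z_{0,\bw}(\bx)|$ at once).
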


\section{The Maxwell equation with a skew-symmetric conductivity matrix}\label{sec:skew}
In this section,  we assume the conductivity matrix $\bS$ defined by Eq. (\ref{def:bs-skew}).
We shall focus on the properties of eigenvalues.
\begin{lem} For a given $\bw=(w_1,w_2,w_3)\in R^3$, let $v_{2,\bw} = (p,q,r)$, define
	\begin{equation}\label{def:phi_theta}
		\phi_{\bw} = -\frac{i}{|\bw|}(aw_1+bw_2+cw_3)=-i m \cos\alpha, \qquad \theta_{\bw} = \frac{1}{|\bw|} \left|
		\begin{array}{ccc}
			a & b & c\\
			w_1 & w_2  & w_3 \\
			p & q & r \\
		\end{array}
		\right|.
	\end{equation}
	where $\alpha$ is the angle between $\bw$ and $(a,b,c)$ such that $(aw_1+bw_2+cw_3) = m|\bw|\cos\alpha $ with
	\[
	 m^2:= a^2+b^2+c^2.
	\]
Then the transformed matrix $M_{\bw}$, defined by Eq (\ref{def:M}), is equal to
	\begin{equation}\label{eq:M_w}
		M_{\bw} = \left(
		\begin{array}{ccc}
			\sigma & \theta & \bar\theta\\
			-\bar\theta & \sigma + \phi & 0 \\
			-\theta & 0 & \sigma - \phi \\
		\end{array}
		\right).
	\end{equation}
	In addition,
	\begin{equation}\label{eq:theta_phi}
		2|\theta_{\bw}|^2 = m^2 +  \phi_{\bw}^2 = m^2- |\phi_{\bw}|^2 =m^2 \sin^2\alpha.  
	\end{equation}
\end{lem}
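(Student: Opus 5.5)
The plan is to split $\bS=\sigma I_3+K$, where $K$ is the skew-symmetric part. Since $V_{\bw}$ is unitary, we get $M_{\bw}=\sigma I_3+V_{\bw}^{\dagger}KV_{\bw}$, so only the entries of $V_{\bw}^{\dagger}KV_{\bw}$ need to be computed. The key observation is that $K$ acts as a cross product. With $\mathbf{p}:=(a,b,c)^T$, the matrix with first row $(0,c,-b)$ satisfies $K\xi=\xi\times\mathbf{p}=-\mathbf{p}\times\xi$ for every $\xi\in C^3$; equivalently $K=-\Phi_{\mathbf{p}}$, with $\Phi$ as in Eq. (\ref{eq:phi}). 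Consequently
\[
m_{d,k}=\sigma\delta_{dk}+\bar v_{d,\bw}^T\,(v_{k,\bw}\times \mathbf{p}),
\]
and each off-diagonal entry is a scalar triple product $\det(\bar v_{d,\bw},v_{k,\bw},\mathbf{p})$. Skew-Hermitian symmetry of $V^\dagger KV$ is automatic, because $K$ is real skew-symmetric. Hence only the upper triangle needs to be checked.

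I would then compute the entries using the orthonormal frame from the Remark. There $v_{1,\bw}=\bw/|\bw|$ and $v_{2,\bw}=R_{\bw}+iI_{\bw}$, with $(R_{\bw},I_{\bw},\bw)$ real and orthogonal and $|R_{\bw}|=|I_{\bw}|=1/\sqrt2$.

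First, the diagonal entries. For $d=1$ the vector $v_1$ is real, so $v_1\cdot(v_1\times\mathbf{p})=0$ and $m_{11}=\sigma$. For $d=2$ we have $m_{22}-\sigma=\det(\bar v_2,v_2,\mathbf{p})=(\bar v_2\times v_2)\cdot\mathbf{p}$. Here $\bar v_2\times v_2=2i\,R_{\bw}\times I_{\bw}$. Relation (\ref{eq:Phi_w_case}), i.e. $\bw\times v_2=i|\bw|v_2$, fixes the orientation $R\times I=-\bw/(2|\bw|)$ (to be checked carefully). This gives $-i(\mathbf{p}\cdot\bw)/|\bw|=\phi_{\bw}$, and $m_{33}$ is the conjugate-type quantity $-\phi_{\bw}$.

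Next, the off-diagonal entries. For the $(2,3)$ entry we use $\bar v_2=v_3$, so $\bar v_2^T(v_3\times\mathbf{p})=\det(v_3,v_3,\mathbf{p})=0$. For the $(1,2)$ entry we get $v_1\cdot(v_2\times\mathbf{p})=\det(\bw,v_2,\mathbf{p})/|\bw|$, which is a cyclic permutation of the determinant defining $\theta_{\bw}$, so it equals $\theta_{\bw}$. Similarly, the $(1,3)$ entry is the same expression with $v_3=\bar v_2$, and since $\mathbf{p},\bw$ are real it equals $\bar\theta_{\bw}$. The lower triangle then follows from skew-Hermitian symmetry, which yields Eq. (\ref{eq:M_w}).

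For Eq. (\ref{eq:theta_phi}) I would write $\mathbf{p}$ in the frame as $\mathbf{p}=(\mathbf{p}\cdot v_1)v_1+2(\mathbf{p}\cdot R)R+2(\mathbf{p}\cdot I)I$. Then $\theta_{\bw}=v_1\cdot(v_2\times\mathbf{p})$ involves only the components of $\mathbf{p}$ perpendicular to $\bw$. Using $v_1\times R=\pm\sqrt{\cdot}\,I$-type relations (namely $v_1\times R=I$ and $v_1\times I=-R$, up to orientation), we get $|\theta|^2=(\mathbf{p}\cdot R)^2+(\mathbf{p}\cdot I)^2=\tfrac12|\mathbf{p}_\perp|^2=\tfrac12 m^2\sin^2\alpha$. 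Finally, $\phi^2=-m^2\cos^2\alpha$ gives the remaining equalities.

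The main obstacle is sign and orientation bookkeeping: the handedness of $(R_{\bw},I_{\bw},v_{1,\bw})$ determines whether the diagonal entries come out as $\sigma\pm\phi$ in the stated order, and whether the $(1,2)$ entry is $\theta$ or $-\theta$. I would settle this once, using $\Phi_{\bw}v_2=i|\bw|v_2$, which gives $\bw\times R=-|\bw|I$ and $\bw\times I=|\bw|R$, and derive every sign from that single identity. This also covers the degenerate case $\br_{\bw}=0$ uniformly.
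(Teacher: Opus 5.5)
Your proof is correct. It shares the paper's first step, removing $\sigma I_3$ via unitarity of $V_{\bw}$, and after that it takes a genuinely different route.

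\textbf{Off-diagonal entries.} The paper just asserts that ``direct matrix multiplication'' produces the $\theta$ entries and the zeros. You observe that $K\xi=\xi\times\mathbf p$ for your $\mathbf p=(a,b,c)^T$. This makes every entry a triple product, and skew-Hermitian symmetry then halves the work.

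\textbf{Computing $\phi_{\bw}$.} The paper computes $\phi_{\bw}=\det(\mathbf p,\bar v_{2,\bw},v_{2,\bw})$ by substituting the explicit coordinate formula for $v_{2,\bw}$ and expanding determinants with $s_{\bw}$. That formula is valid only when $\br_{\bw}\neq 0$, since it divides by $\nu_{\bw}^2$. You instead obtain the one orientation fact you need, $R_{\bw}\times I_{\bw}=-\bw/(2|\bw|)$, from $\Phi_{\bw}v_{2,\bw}=i|\bw|v_{2,\bw}$ alone. That argument is coordinate-free and also covers the case $w_1=w_2=w_3$, which the paper's computation does not literally handle.

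\textbf{The identity for $|\theta_{\bw}|^2$.} The paper uses invariance of the Frobenius norm under unitary conjugation,
\[
3\sigma^2+2m^2=\|\bS\|_F^2=\|M_{\bw}\|_F^2=3\sigma^2+4|\theta_{\bw}|^2+2|\phi_{\bw}|^2 .
\]
Once $\phi_{\bw}$ is known, this gives $2|\theta_{\bw}|^2=m^2-|\phi_{\bw}|^2$ in one line with no orientation bookkeeping. You could have used the same shortcut. Your decomposition of $\mathbf p$ in the real frame takes longer but says more: it shows that $\theta_{\bw}=\mathbf p\cdot(v_{1,\bw}\times v_{2,\bw})$ encodes the component of $\mathbf p$ orthogonal to $\bw$.

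\textbf{Two minor points.}
\begin{enumerate}
\item The sign of the $(1,2)$ entry does not depend on orientation at all. It is a cyclic permutation of the determinant defining $\theta_{\bw}$.
\item With the orientation you fixed, $v_{1,\bw}\times R_{\bw}=-I_{\bw}$ and $v_{1,\bw}\times I_{\bw}=R_{\bw}$, the reverse of what you wrote. Hence $\theta_{\bw}=-\mathbf p\cdot I_{\bw}+i\,\mathbf p\cdot R_{\bw}$. This is harmless, since only $|\theta_{\bw}|$ enters the identity.
\end{enumerate}
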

\begin{proof}
	Since $V_{\bw}$ is unitary, we can verify Eq (\ref{eq:M_w}) by assuming $\sigma=0$ without loss of generosity. Directly applying matrix multiplication leads to the expression (\ref{eq:M_w}) with the $\theta$ defined by Eq (\ref{def:phi_theta}).  We provide the derivation of $\phi_{\bw}$ as follows
	\begin{eqnarray*}
		\phi_{\bw} &=& a(r\bar q - \bar r q) - b(r\bar p - \bar r p )  +c (q\bar p - \bar q p)= \left|
		\begin{array}{ccc}
			a & b & c\\
			\bar p & \bar q & \bar r \\
			p & q & r \\
		\end{array}
		\right|\\
		&=& 2i \left|
		\begin{array}{ccc}
			a & b & c\\
			\mathfrak{Re}(p) & \mathfrak{Re} (q) & \mathfrak{Re} (r) \\
			\mathfrak{Im}(p) & \mathfrak{Im} (q) & \mathfrak{Im} (r) \\
		\end{array}
		\right|= \frac{2i}{\nu^2_{\bw} |\bw|^3} \left|
		\begin{array}{ccc}
			a & b & c\\
			s_{\bw}w_1-|\bw|^2 & s_{\bw}w_2-|\bw|^2  & s_{\bw}w_3-|\bw|^2 \\
			w_2-w_3 & w_3-w_1 & w_1-w_2 \\
		\end{array}
		\right|\\
		&=&\frac{2i s_{\bw}}{\nu^2_{\bw} |\bw|^3} \left|
		\begin{array}{ccc}
			a & b & c\\
			w_1 & w_2  & w_3 \\
			w_2-w_3 & w_3-w_1 & w_1-w_2 \\
		\end{array}
		\right| - \frac{2i}{\nu^2_{\bw} |\bw|} \left|
		\begin{array}{ccc}
			a & b & c\\
			1 & 1  & 1 \\
			w_2-w_3 & w_3-w_1 & w_1-w_2 \\
		\end{array}
		\right| \\
		&=& \frac{2i (s_{\bw}^2-3|\bw|^2)}{|\bw|^3\nu^2_{\bw}} (aw_1+bw_2+cw_3)= -\frac{i}{|\bw|}(aw_1+bw_2+cw_3).
	\end{eqnarray*}
	since $V_{\bw}$ is unitary,  Eq. (\ref{eq:theta_phi}) follows by observing $\bS$ and $M_{\bw}$ have same $l_2$ norm, i.e.
	\[
	2(a^2+b^2+c^2) + 3\sigma^2 = \sigma^2 + 2(\sigma + \phi) (\sigma + \bar\phi_{\bw} ) + 4\phi_{\bw}\bar\phi_{\bw} = 3\sigma^2 -2\phi^2 +4\phi\bar\phi.
	\]
\end{proof}
By Eqs (\ref{eq:implied_1}-\ref{eq:implied_2}), 
\begin{eqnarray}
	\lambda b_d &=& ia_d \mu_{d,\bw},  \quad 1\le d \le 3  \label{eq:eigen_details_1}\\
	\lambda a_1   &=& i (\sigma a_1 + \theta a_2 + \bar \theta a_3)   \label{eq:eigen_1}\\
	\lambda a_2   &=& -ib_2 \mu_{2,\bw}  + i (-\bar\theta a_1 + (\sigma + \phi) a_2 )   \label{eq:eigen_2}\\
	\lambda a_3   &=& -ib_3 \mu_{3,\bw}  + i (-\theta a_1 + (\sigma-\phi) a_3)   \label{eq:eigen_3}
\end{eqnarray}
We aim to find $6$ eigenvalues and associated eigenvectors. It is easy to verify that 
\[
\lambda = 0, \quad 	(a_1,a_2,a_3,b_1,b_2,b_3)=(0,0,0,1,0,0)
\]
solve (\ref{eq:implied_1}-\ref{eq:implied_2}). 
We denote 
\[
\lambda_1 = 0, \quad \begin{pmatrix}
	X^1_{\bw}  \\
	Y^1_{\bw} 
\end{pmatrix} = \begin{pmatrix}
	0  \\
	v_{1,\bw} 
\end{pmatrix}.
\]
We shall assume $\lambda\neq 0$ in the rest of discussion on the solution (\ref{eq:eigen_details_1}-\ref{eq:eigen_3}). By Eq (\ref{eq:eigen_details_1}), we have
\begin{equation}\label{eq:b1a2a3}
	b_1=0, \quad  a_2= i |\bw|\lambda b_2, \quad a_3= -i |\bw|\lambda b_3
\end{equation}
and Eq (\ref{eq:eigen_1}-\ref{eq:eigen_3}) is equivalent to
\begin{equation} \label{eq:Omega_a_b}
\Omega \left(
\begin{array}{c}
	a_1  \\
	b_2 \\
	b_3 \\
\end{array}
\right) = \left(
\begin{array}{c}
	0 \\
	0 \\
	0 \\
\end{array}
\right) 
\end{equation}
where
\[
\Omega = \left(
\begin{array}{ccc}
	i\sigma -\lambda &  -\theta\lambda/|\bw|  &  \bar\theta\lambda/|\bw| \\
	-|\bw| \bar\theta &  -\lambda^2+ i(\sigma+\phi)\lambda + |\bw|^2   & 0 \\
	|\bw| \theta & 0 & -\lambda^2+ i(\sigma-\phi)\lambda + |\bw|^2  \\
\end{array}
\right).
\]
One can calculate the determinant of $\Omega$
\begin{eqnarray}
|\Omega| &=& (i\sigma -\lambda )(-\lambda^2+ i(\sigma+\phi)\lambda + |\bw|^2 )( -\lambda^2+ i(\sigma-\phi)\lambda + |\bw|^2) \nonumber\\
&+& 2|\theta|^2 \lambda(\lambda^2 - i\sigma\lambda -|\bw|^2)\label{eq:Q_1}\\
&=& (i\sigma -\lambda )(\lambda^2 -i\sigma\lambda -|\bw|^2 )^2 -\phi^2 \lambda^2 (\lambda-i\sigma) + 2|\theta|^2 \lambda(\lambda^2 - i\sigma\lambda -|\bw|^2) \label{eq:Omega} \label{eq:Q_2}\\
&=& -\lambda^5 + 3i\sigma\lambda^4 + (3\sigma^2 + 2|\bw|^2 + m^2) \lambda^3 \nonumber\\
&-& i\sigma (\sigma^2 + 4|\bw|^2 + m^2)\lambda^2 - |\bw|^2 (2\sigma^2 + |\bw|^2 + 2|\theta|^2) \lambda + i\sigma|\bw|^4. \label{eq:Q_3}
\end{eqnarray}
Substitute  $\lambda = iy$ in Eq (\ref{eq:Q_2}),  $|\Omega|=0$ implies 
\begin{equation}\label{def:gy}
g(y):=(\sigma -y)(y^2 -\sigma y +|\bw|^2 )^2 +\phi^2 y^2 (y-\sigma) -2|\theta|^2 y(y^2 - \sigma y +|\bw|^2)=0 
\end{equation}
it is clear that there is at least one  real solution $y>0$. Replace $\phi^2=-m^2 \cos^2\alpha$ and $2|\theta|^2 = m^2 \sin^2\alpha$,  Eq. (\ref{def:gy}) is reduced to 
\begin{equation}\label{eq:sin}
\sin^2\alpha = \frac{(\sigma - y)(m^2y^2 + (y^2-\sigma y + |\bw|^2)^2 )}{m^2 |\bw|^2 y}
\end{equation}
\begin{prop}
There is at least one solution of $|\Omega=0|$ such that $\lambda=iy$ for a positive $y>0$. Furthermore, any such solution must satisfy
\begin{equation}
	  \frac{|\bw|^2 (\sigma -\frac m2)}{\sigma^2 + |\bw|^2} \le y \le \sigma
\end{equation}
\end{prop}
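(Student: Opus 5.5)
The plan is to work directly with the real polynomial $g(y)$ of Eq. (\ref{def:gy}), whose positive zeros correspond to roots $\lambda=iy$ of $|\Omega|=0$. Existence will come from the intermediate value theorem. Both bounds will come from the reformulation (\ref{eq:sin}), using $0\le\sin^2\alpha\le 1$. Throughout I assume $\sigma>0$. This is needed: for $\sigma=0$ every term of $g(y)$ is $\le 0$ for $y>0$, since $\phi^2=-m^2\cos^2\alpha$, and in fact $g(y)<0$ there, so no positive root exists. Write $Q(y)=y^2-\sigma y+|\bw|^2$.

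\emph{Existence.} We have $g(0)=\sigma|\bw|^4>0$, while $g(y)\sim -y^5\to-\infty$ as $y\to\infty$. Hence $g$ has a zero $y\in(0,\infty)$, and $\lambda=iy\neq 0$ solves $|\Omega|=0$.

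\emph{Upper bound.} First suppose $m>0$. For $y>0$ the factor $m^2y^2+Q(y)^2$ is strictly positive, and the denominator $m^2|\bw|^2y$ in (\ref{eq:sin}) is positive. So $\sin^2\alpha\ge 0$ forces $\sigma-y\ge 0$, i.e. $y\le\sigma$.

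Now suppose $m=0$. Then $g(y)=(\sigma-y)Q(y)^2$, so the positive roots are $y=\sigma$ and the roots of $Q$. A root of $Q$ satisfies $y(\sigma-y)=|\bw|^2>0$, hence again $y<\sigma$.

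\emph{Lower bound.} This is the main step, and the difficulty is that (\ref{eq:sin}) is quintic in $y$, so it must be linearized.

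Take $m>0$. From $\sin^2\alpha\le 1$ we get
\[
(\sigma-y)\bigl(m^2y^2+Q^2\bigr)\le m^2|\bw|^2y .
\]
By AM--GM, $m^2y^2+Q^2\ge 2my|Q|$. Using $\sigma-y\ge 0$ and dividing by $my>0$, this gives
\[
2(\sigma-y)|Q|\le m|\bw|^2 .
\]
Split into two cases according to the sign of $|\bw|^2-\sigma y$.

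If $|\bw|^2\ge\sigma y$, then $Q=|\bw|^2-\sigma y+y^2\ge |\bw|^2-\sigma y\ge0$, so
\[
2(\sigma-y)(|\bw|^2-\sigma y)\le m|\bw|^2 .
\]
Expanding this yields
\[
|\bw|^2\Bigl(\sigma-\frac m2\Bigr)\le y(\sigma^2+|\bw|^2)-\sigma y^2\le y(\sigma^2+|\bw|^2),
\]
which is exactly the claimed lower bound.

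If $|\bw|^2<\sigma y$, then
\[
y>\frac{|\bw|^2}{\sigma}\ge\frac{|\bw|^2\sigma}{\sigma^2+|\bw|^2}\ge \frac{|\bw|^2(\sigma-\frac m2)}{\sigma^2+|\bw|^2},
\]
and the bound again holds.

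In the case $m=0$ the required bound is $y\ge |\bw|^2\sigma/(\sigma^2+|\bw|^2)$. The root $y=\sigma$ satisfies it trivially. A root of $Q$ satisfies $y=|\bw|^2/(\sigma-y)\ge|\bw|^2/\sigma$, which exceeds the bound by the same comparison as in the second case above. This completes the plan.
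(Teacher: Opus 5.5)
Your proof is correct and takes the paper's route: the intermediate value theorem on $g$ for existence, the sign of the right-hand side of (\ref{eq:sin}) for $y\le\sigma$, and $\sin^2\alpha\le 1$ combined with AM--GM on $m^2y^2+Q^2$ for the lower bound. The differences are minor. The paper uses the signed bound $m^2y^2+Q^2\ge 2myQ$ together with the identity $(\sigma-y)Q=|\bw|^2(\sigma-y)-y(\sigma-y)^2$, which gives $y\bigl((\sigma-y)^2+|\bw|^2\bigr)\ge|\bw|^2(\sigma-\frac m2)$ directly and so avoids your case split. Your explicit treatment of $m=0$, and your observation that $\sigma>0$ is needed, fill small gaps that the paper leaves implicit.
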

\begin{proof}
Note $g(0)>0$ and $\lim_{y\to \infty}g(y) = -\infty$,  which implies there is at least one positive solution. For any positive solution $y$,  it is clear $y\le \sigma$ by Eq (\ref{eq:sin}).  Eq (\ref{eq:sin}) implies
\[
1\ge   \frac{2(\sigma - y)my(y^2-\sigma y + |\bw|^2)}{m^2 |\bw|^2 y} = \frac{2(\sigma - y)(y^2-\sigma y + |\bw|^2) )}{m |\bw|^2 } 
\] 
and therefore
\[
y \ge \frac{|\bw|^2 (\sigma-\frac m2)}{(y-\sigma)^2 + |\bw|^2} \ge \frac{|\bw|^2 (\sigma-\frac m2)}{\sigma^2 + |\bw|^2} 
\]
\end{proof}
By Eq (\ref{eq:Q_3}),  one can show the following property and the details can be found in Section \ref{app-prop:real_solution}.
\begin{prop}\label{prop:real_solution} There exists real solution $\lambda$ for $|\Omega|=0$ if and only if $\sigma=0$. 
\end{prop}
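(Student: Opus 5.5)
The plan is to read Proposition \ref{prop:real_solution} as a statement about eigenvalues of $\BB_{\bw}$, with $\bw\neq0$ throughout. By the reduction leading to Eq (\ref{eq:Omega_a_b}), a nonzero $\lambda$ satisfies $|\Omega|=0$ exactly when it is a nonzero eigenvalue of $\BB_{\bw}$. Separately, $\lambda=0$ is a root of $|\Omega|$ iff the constant term $i\sigma|\bw|^4$ in Eq (\ref{eq:Q_3}) vanishes, i.e. iff $\sigma=0$. So it suffices to show two things: for $\sigma>0$, $\BB_{\bw}$ has no nonzero real eigenvalue; and for $\sigma=0$, real roots exist (including nonzero ones, which is the case of interest after the standing assumption $\lambda\ne0$).

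For $\sigma>0$ I would use an energy identity rather than the quintic. Let $(X^T,Y^T)^T\neq0$ satisfy $\BB_{\bw}(X^T,Y^T)^T=\lambda(X^T,Y^T)^T$. Take the inner product with $(X^T,Y^T)^\dagger$:
\[
\lambda(|X|^2+|Y|^2)=iX^\dagger\bS X+X^\dagger\Phi_{\bw}Y-Y^\dagger\Phi_{\bw}X .
\]
Since $\Phi_{\bw}$ is real skew-symmetric, $Y^\dagger\Phi_{\bw}X=-\overline{X^\dagger\Phi_{\bw}Y}$. Hence $X^\dagger\Phi_{\bw}Y-Y^\dagger\Phi_{\bw}X=2\,\mathfrak{Re}(X^\dagger\Phi_{\bw}Y)$ is real.

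Write $\bS=\sigma I_3+K$, where $K$ is the real skew part from Eq (\ref{def:bs-skew}). Then $X^\dagger KX$ is purely imaginary, so $iX^\dagger KX$ is real. Taking imaginary parts gives
\[
\mathfrak{Im}(\lambda)\,(|X|^2+|Y|^2)=\sigma|X|^2 .
\]
If $\lambda$ is real and $\sigma>0$, this forces $X=0$. The second block row then gives $\lambda Y=-\Phi_{\bw}X=0$. Since $Y\neq0$ (the eigenvector is nonzero and $X=0$), we get $\lambda=0$. This contradicts the nonzero-eigenvalue case, and the zero case is already excluded because $i\sigma|\bw|^4\ne0$. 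Hence no real root exists when $\sigma>0$.

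For $\sigma=0$, $\lambda=0$ is already a real root. To exhibit nonzero real roots, set $\sigma=0$ in Eq (\ref{eq:Q_3}):
\[
|\Omega|=-\lambda\bigl(\lambda^4-(2|\bw|^2+m^2)\lambda^2+|\bw|^2(|\bw|^2+2|\theta|^2)\bigr).
\]
View the bracket as a quadratic in $u=\lambda^2$. Using Eq (\ref{eq:theta_phi}), $2|\theta|^2=m^2\sin^2\alpha$, so its discriminant is
\[
(2|\bw|^2+m^2)^2-4|\bw|^2(|\bw|^2+m^2\sin^2\alpha)=m^4+4|\bw|^2m^2\cos^2\alpha\ge0 .
\]
The sum of the roots is $2|\bw|^2+m^2>0$ and the product is $|\bw|^2(|\bw|^2+2|\theta|^2)>0$. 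So both roots $u$ are positive, and $\lambda=\pm\sqrt u$ gives real, nonzero solutions.

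The main obstacle I anticipate is bookkeeping rather than any analytic step. First, I must make sure the equivalence between $|\Omega|=0$ and eigenvalues of $\BB_{\bw}$ is used only for $\lambda\neq0$, with $\lambda=0$ handled separately through the constant term. Second, the sign conventions for Hermitian versus anti-Hermitian parts must come out so that exactly $\sigma|X|^2$ survives in the imaginary part. I would double-check that identity against the Ohm case of Lemma \ref{lem:eigen-sigma}. There the eigenvalues $\lambda^\pm_{\bw}$ have imaginary part $\sigma/2$ with $|X|^2=|Y|^2$, since $|\lambda^\pm_{\bw}|=|\bw|$. This matches $\mathfrak{Im}(\lambda)(|X|^2+|Y|^2)=\sigma|X|^2$.
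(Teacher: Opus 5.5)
Your proof is correct, and for the direction $\sigma\neq0$ it takes a genuinely different route from the paper.

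The paper substitutes a real $\lambda\neq0$ into the quintic \eqref{eq:Q_3} and separates real and imaginary parts to get the two biquadratics \eqref{A-3}--\eqref{A-4}. It then eliminates to express $\lambda^2$ and $|\theta|^2$ through $\sigma,m,|\bw|$, and reaches a contradiction with $2|\theta|^2=m^2\sin^2\alpha\le m^2$. That argument never leaves the polynomial, but the elimination is long and hard to check.

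You instead pass to eigenvectors of $\BB_{\bw}$ and use the dissipation identity $\mathfrak{Im}(\lambda)(|X|^2+|Y|^2)=\sigma|X|^2$. It needs only that $\Phi_{\bw}$ and the skew part of $\bS$ are real skew-symmetric. The translation between roots and eigenvectors is valid. For $\lambda\neq0$ and $|\bw|>0$ one has $b_1=0$, $a_2=i\lambda b_2/|\bw|$, $a_3=-i\lambda b_3/|\bw|$. So a nonzero kernel vector of $\Omega$ corresponds exactly to a nonzero eigenvector.

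Your route gains several things:
\begin{enumerate}
\item It is short and transparent.
\item It gives $0\le\mathfrak{Im}(\lambda)\le\sigma$ for every eigenvalue, which recovers the bound $y\le\sigma$ of the preceding proposition.
\item With $\sigma=0$ it shows directly that every eigenvalue is real. This is the stronger ``all eigenvalues'' form claimed in the introduction, and it offers an alternative to your discriminant computation.
\item It extends verbatim to any real conductivity matrix whose symmetric part is positive definite, where no explicit quintic is available.
\end{enumerate}

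Your discriminant $m^4+4|\bw|^2m^2\cos^2\alpha$ for $\sigma=0$ is the correct one. The paper's explicit list of roots drops the factor $4$ in its second entry.
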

The close form solution $\{\lambda_k\}_{k=2}^6$ is available in two special cases.
\begin{enumerate}
	\item $\alpha=\frac{\pi}2$. In this case, $\phi=0, 2|\theta|^2 = m^2>0$,  define
	\[
	\tau^2 = 2|\theta|^2 + |\bw|^2 + \sigma^2 = m^2 + |\bw|^2 + \sigma^2 
	\]
By Eq (\ref{eq:Omega}), $|\Omega|=0$ implies 
	\begin{eqnarray*}
		(i\sigma -\lambda )(\lambda^2 -i\sigma\lambda -|\bw|^2 )^2 + 2|\theta|^2 \lambda(\lambda^2 - i\sigma\lambda -|\bw|^2) =0
	\end{eqnarray*}
Let $\lambda_3$ and $\lambda_5$ denote the two solutions of  
	\[
		\lambda^2 -i\sigma\lambda -|\bw|^2 =0.
		\]
We obtain
		\begin{eqnarray*}
		\lambda_3 = \frac 1{2} (i\sigma + \sqrt{4|\bw|^2 -\sigma^2}), \quad  \lambda_5 = \frac 12 (i\sigma - \sqrt{4|\bw|^2 -\sigma^2}),
	\end{eqnarray*}
which recovers the two eigenvalues in the case $\bJ = \sigma I_3$ as shown by Eqs. (\ref{def:lambda_34}-\ref{def:lambda_56}).
The other three solutions $\lambda_2, \lambda_4$ and $\lambda_6$,  solves the following equation
\begin{equation}\label{eq:lambda246}
	\lambda^3 -2 i\sigma\lambda^2  - \tau^2\lambda + i\sigma |\bw|^2  =0
\end{equation}
Let $\lambda=i y$, we can transform Eq (\ref{eq:lambda246}) to a equation with real coefficients
\begin{equation}\label{eq:y}
	g(y):=y^3 -2 \sigma y^2  + \tau^2 y  - \sigma |\bw|^2  =0.
	\end{equation}
The solution structure of Eq. (\ref{eq:y}) is summarized below and the proof is refereed to Appendix \ref{app-lem:y_structure}.
\begin{lem}\label{lem:y_structure} 
\begin{enumerate}
	\item The unique real solution of Eq. (\ref{eq:y}) with multiplicity $3$ occurs at  $y_1=y_2=y_3=\frac{\sqrt 6}{2} |\bw|$ when $\sigma=\frac{3\sqrt6}{2}|\bw|$ and $m=\frac{\sqrt 2}{4}|\bw|$. 
	\item The unique real solution of Eq. (\ref{eq:y}) with multiplicity $2$ occurs at $\hat y := \frac{9|\bw|^2-2\tau}{2(3\tau - 4\sigma^2)} \sigma$ when $3\tau - 4\sigma^2>0$ and $g(\hat y) = g'(\hat y)=0$.
	\item In other cases, there are three different solutions $y_1,y_2,y_3$ such that $y_1$ is real and $0<y_1<\sigma$. Furthermore, if $y_2, y_3 $ are complex, then $y_3 = \bar{y_2}$.
\end{enumerate}
\end{lem}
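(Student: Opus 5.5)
The plan is to treat Eq. (\ref{eq:y}) as a real cubic and use only elementary facts about cubics: sign changes, the discriminant, and $\gcd(g,g')$. Throughout we assume $\sigma>0$ and $m>0$; the case $m=0$ is excluded since $\alpha=\pi/2$ forces $2|\theta|^2=m^2>0$.

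\textbf{Step 1: a real root in $(0,\sigma)$.} Evaluate $g$ at the endpoints. We have $g(0)=-\sigma|\bw|^2<0$. Using $\tau^2=m^2+|\bw|^2+\sigma^2$,
\[
g(\sigma)=\sigma^3-2\sigma^3+\tau^2\sigma-\sigma|\bw|^2=\sigma m^2>0 .
\]
By the intermediate value theorem there is a real root $y_1\in(0,\sigma)$. This already gives the localisation claimed in part (3). Since $g$ has real coefficients, any non-real roots come in a conjugate pair $y_3=\bar y_2$. So the only remaining issue is to classify when roots coincide.

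\textbf{Step 2: triple root.} Write $g(y)=(y-y_0)^3$ and compare coefficients:
\[
3y_0=2\sigma,\qquad 3y_0^2=\tau^2,\qquad y_0^3=\sigma|\bw|^2 .
\]
The first and third relations give $y_0=2\sigma/3$ and $8\sigma^2/27=|\bw|^2$. Hence $\sigma^2=\tfrac{27}{8}|\bw|^2$ and $y_0=\tfrac{\sqrt6}{2}|\bw|$. The second relation gives $\tau^2=\tfrac43\sigma^2$, so $m^2=\tfrac{\sigma^2}{3}-|\bw|^2=\tfrac18|\bw|^2$, i.e. $m=\tfrac{\sqrt2}{4}|\bw|$.

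The values $y_0$ and $m$ match the statement. However, this computation gives $\sigma=\tfrac{3\sqrt6}{4}|\bw|$ rather than the stated $\tfrac{3\sqrt6}{2}|\bw|$. I would recheck this constant, since it appears to be a typo in the statement. Conversely, these parameter values clearly make $g=(y-y_0)^3$.

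\textbf{Step 3: double root.} A double root $\hat y$ satisfies $g(\hat y)=g'(\hat y)=0$, where $g'(y)=3y^2-4\sigma y+\tau^2$. The critical points are real only if $4\sigma^2-3\tau^2\ge0$. Equality there corresponds to the inflection (triple) case of Step 2, so the genuine double-root case needs the strict inequality. I would state this condition in that form and check it against the sign condition written in the lemma.

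To find $\hat y$, reduce $g$ modulo $g'$. First compute
\[
g-\tfrac{y}{3}g'=-\tfrac{2\sigma}{3}y^2+\tfrac{2\tau^2}{3}y-\sigma|\bw|^2 .
\]
Then add $\tfrac{2\sigma}{9}g'$ to eliminate $y^2$. This leaves a linear equation
\[
\Bigl(\tfrac{2\tau^2}{3}-\tfrac{8\sigma^2}{9}\Bigr)y=\sigma|\bw|^2-\tfrac{2\sigma\tau^2}{9},
\]
so
\[
\hat y=\frac{\sigma(9|\bw|^2-2\tau^2)}{2(3\tau^2-4\sigma^2)} .
\]
This is the formula in the statement, read with $\tau^2$ in place of $\tau$. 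The double root is real and unique because it is determined linearly. The remaining simple root is real as well, since a non-real root would have to come with its conjugate.

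\textbf{Step 4: the generic case and the main difficulty.} In all other cases the discriminant of $g$ is nonzero, so the three roots are distinct. Steps 1 and 3 then give exactly the structure asserted in part (3). The main obstacle is bookkeeping rather than ideas. One has to see that the conditions "$g(\hat y)=g'(\hat y)=0$" and "triple root" exhaust the vanishing of the discriminant. One also has to reconcile the constants and the sign condition as printed in the statement with the computation above. I would double-check those by substituting the Step 2 values back into $g$.
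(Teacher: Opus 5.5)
Your proposal is correct and takes essentially the same route as the paper. Both use the sign change $g(0)<0<g(\sigma)=\sigma m^2$, settle the triple-root case by matching coefficients (the paper also uses $g''(\hat y)=0$), find the double root by eliminating between $g$ and $g'$, and treat everything else as three distinct roots by definition. Your corrections are also right and agree with the paper's own computations: the triple-root case needs $\sigma=\tfrac{3\sqrt6}{4}|\bw|$, the $\tau$ in the appendix should be read as $\tau^2$, and the paper's proof itself derives $4\sigma^2-3\tau^2>0$ for the double root, so the sign printed in part (2) of the statement is a typo.
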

\item $\alpha=0$.  In this case, $\theta=0, \phi = -im $.  The solutions include $i\sigma$ and other four different complex numbers if $m \neq 0$. 
\begin{eqnarray*}
& & \frac12 (i(\sigma-im) + \sqrt{4|\bw|^2 - (\sigma-im)^2}), \quad \frac12 (i(\sigma-im) - \sqrt{4|\bw|^2 - (\sigma-im)^2})\\
& & \frac12 (i(\sigma +im) + \sqrt{4|\bw|^2 - (\sigma+im)^2}), \quad \frac12 (i(\sigma +im) - \sqrt{4|\bw|^2 - (\sigma+im)^2}).
\end{eqnarray*}
Note that the solution $i\sigma$ also apply to the case $\bJ = \sigma I_3$ as shown by Eq. (\ref{def:lambda_12}).   If $m=0$,  all solutions are reduced to the solutions to the case $\bJ=\sigma I_3$ as expected.
\end{enumerate}
\section{The Maxwell equation with an independent local electromagnetic field}\label{sec:jg}
In this section, we consider a general case by adding term $J^g$ as part of the current density $\bJ$ as expressed in Eq. (\ref{def:Sigma}).  Assume that the conductivity matrix $\bS$ has the inverse $\boldsymbol {\rho } = \bS^{-1}$. We want to provide the solution for following Maxwell equations.
\begin{eqnarray}
\frac{\partial \HH}{\partial t} &=&-\CU (\EE), \qquad  \frac{\partial \EE}{\partial t} = \CU  (\HH) -  \bS \EE -  J^g(\bx), \label{maxsys a-2}\\
\HH(\bx,0) &=& H_0(\bx) ,\qquad  \EE(\bx,0) =E_0(\bx), \label{IV-2}
\end{eqnarray}
With the tools developed in previous sections,  it turns out that the desired solutions can be directly constructed as follows. 
\begin{theo}\label{theorem-main-3}
Let $J^g(\bx)$ be continuous differentiable and can be represented by its Fourier series as follows
\[
J^g(\bx) = \sum_{\bw} a_{\bw}e^{i\bx\cdot \bw} = v_0 + \sum_{\bw\neq 0} \sum_{1\le d \le 3} \alpha_{d,\bw}\bv_{d,\bw} e^{i\bx\cdot \bw}.
\]
where $v_0$ is a constant vector in $C^3$.
Define 
\[
\psi(\bx)= \sum_{\bw \neq 0} \sum_{2\le d \le 3} \frac{1}{\mu_{d,\bw}} \alpha_{d,\bw}\bv_{d,\bw} e^{i\bx\cdot \bw} 
\]
and
\[
\phi(\bx) = \brho (v_0 + \sum_{\bw \neq 0} \alpha_{1,\bw}\bv_{1,\bw} e^{i\bx\cdot \bw})
\]
Let $U, V$ be the solution 
\begin{eqnarray*}
	\frac{\partial V}{\partial t} &=&-\CU (U), \qquad  \frac{\partial U}{\partial t} = \CU  (V) -  \bS U, \label{maxsys a-home}\\
	V(\bx,0) &=& H_0(\bx) - \psi(\bx) ,\qquad  U(\bx,0) = E_0(\bx) + \phi(\bx)  
\end{eqnarray*}
Then the desired solution for (\ref{maxsys a-2}) and (\ref{IV-2}) is given by
\[
\HH = V(\bx, t) +\psi(\bx), \quad \EE= U(\bx, t)-\phi(\bx).
\]
\end{theo}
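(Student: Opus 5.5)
The plan is to substitute the ansatz $\HH = V+\psi$, $\EE = U-\phi$ directly into (\ref{maxsys a-2})--(\ref{IV-2}). Since $\psi$ and $\phi$ do not depend on $t$, the problem reduces to two time-independent identities:
\begin{equation*}
\CU(\psi) + \bS\phi = J^g, \qquad \CU(\phi) = 0 .
\end{equation*}
The initial conditions are automatic: $\HH(\bx,0) = V(\bx,0)+\psi = H_0$ and $\EE(\bx,0)=U(\bx,0)-\phi = E_0$. Existence of $(U,V)$ comes from Theorem \ref{thm-main}, or from Theorem \ref{thm-2} in the scalar case, applied to the shifted initial data.

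\textbf{The $\EE$-equation.} I will compute $\partial_t\EE = \partial_t U = \CU(V) - \bS U = \CU(\HH) - \CU(\psi) - \bS\EE - \bS\phi$, so it suffices to show $\CU(\psi)+\bS\phi = J^g$. I will do this termwise using Lemma \ref{prop:curl-2}, namely $\CU(\bv_{d,\bw}e^{i\bx\cdot\bw}) = \mu_{d,\bw}\bv_{d,\bw}e^{i\bx\cdot\bw}$. This gives
\begin{equation*}
\CU(\psi) = \sum_{\bw\neq0}\sum_{d=2}^3 \alpha_{d,\bw}\bv_{d,\bw}e^{i\bx\cdot\bw},
\end{equation*}
which is exactly the transverse part of $J^g$. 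Since $\bS\brho = I_3$, we have $\bS\phi = v_0 + \sum_{\bw\neq 0}\alpha_{1,\bw}\bv_{1,\bw}e^{i\bx\cdot\bw}$, which is the constant plus longitudinal part. Summing the two gives $J^g$.

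Justifying the termwise differentiation is routine. The factors $1/\mu_{d,\bw} = \mp 1/|\bw|$ only improve decay, so the series for $\psi$ converges. Its curl is the transverse part of the Fourier series of the $C^1$ function $J^g$, and I will assume that series converges absolutely, in the same sense as (\ref{abs_sum_2}).

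\textbf{The $\HH$-equation.} Here $\partial_t\HH = \partial_t V = -\CU(U) = -\CU(\EE) - \CU(\phi)$, so I need $\CU(\phi)=0$. The constant $\brho v_0$ has zero curl. For a Fourier mode, $\CU(\brho\bv_{1,\bw}e^{i\bx\cdot\bw}) = i\Phi_{\bw}\brho\bv_{1,\bw}e^{i\bx\cdot\bw}$. This vanishes precisely when $\brho\bw$ is parallel to $\bw$, by (\ref{eq:Phi_w_case}), i.e.\ when $\bw$ is an eigenvector of $\bS$.

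\textbf{Main obstacle.} This last step is where I expect the difficulty. It clearly holds in the Ohm case $\bS=\sigma I_3$ treated in Section \ref{sec:j=sigmmaE}, and more generally when $\alpha_{1,\bw}=0$ except for directions $\bw$ that are eigenvectors of $\bS$. For the skew matrix (\ref{def:bs-skew}), however, $\bS\bw = \sigma\bw + \bw\times(a,b,c)^T$, which is parallel to $\bw$ only if $\alpha=0$. So I would first check whether the statement implicitly intends $\bS$ to be scalar, and state that hypothesis explicitly.

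If the general case is wanted, I would instead decompose $\brho\bv_{1,\bw}$ into its $\bv_{1,\bw}$ component and its transverse part. The transverse part would then be absorbed into a correction to $\psi$, which requires solving a $3\times 3$ linear system per $\bw$ built from $M_{\bw}$ in (\ref{def:M}). I expect that system to be solvable whenever $\sigma>0$ and $|\bw|>0$.
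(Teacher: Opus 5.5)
Your argument is the paper's argument: substitute $\HH=V+\psi$, $\EE=U-\phi$, use that $\psi,\phi$ are time-independent, and reduce to $\CU(\psi)+\bS\phi=J^g$ and $\CU(\phi)=0$. The only difference is that the paper simply asserts $\CU(\phi)=0$. Your objection to that step is correct, so the gap lies in the paper's proof, not in yours; in fact the statement is false as written.

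Here is a concrete counterexample. Take (\ref{def:bs-skew}) with $\sigma=c=1$, $a=b=0$, which is invertible, and $J^g=(\cos x_1,0,0)^{\intercal}$, which is purely longitudinal with $v_0=0$. Then $\psi=0$, $\phi=\brho J^g=\tfrac12(\cos x_1,\cos x_1,0)^{\intercal}$, and $\CU(\phi)=(0,0,-\tfrac12\sin x_1)^{\intercal}\neq0$. So the proposed fields violate $\partial_t\HH=-\CU(\EE)$ by a nonzero stationary term.

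The identity does hold for $\bS=\sigma I_3$; the paper's last line even writes $\sigma\EE$. Under that hypothesis your proof is complete. The same is true under your weaker hypothesis that $\alpha_{1,\bw}=0$ unless $\bS\bw\parallel\bw$, which for (\ref{def:bs-skew}) means unless $\bw\parallel(a,b,c)^{\intercal}$.

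Your expectation for the general case is also right, but the framing needs one correction. You cannot keep the $\bv_{1,\bw}$-component of $\brho\bv_{1,\bw}$ and move its transverse part into $\psi$. Discarding that part changes the longitudinal component of $\bS\phi$, which $\CU(\psi)$ cannot compensate. Instead, solve mode by mode for a stationary correction:
\begin{enumerate}
\item $\CU(\phi)=0$ forces the $\bw$-mode of $\phi$ to be $\kappa\,\bv_{1,\bw}e^{i\bx\cdot\bw}$.
\item The $\bv_{1,\bw}$-row of $\CU(\psi)+\bS\phi=J^g$ gives $\kappa=\alpha_{1,\bw}/m_{1,1}$, where $m_{1,1}=\bv_{1,\bw}^{\intercal}\bS\bv_{1,\bw}$ from (\ref{def:M}).
\item The other two rows give the $\bw$-mode of $\psi$ as
\[
\sum_{d=2}^{3}\frac{1}{\mu_{d,\bw}}\Bigl(\alpha_{d,\bw}-\frac{m_{d,1}}{m_{1,1}}\,\alpha_{1,\bw}\Bigr)\bv_{d,\bw}e^{i\bx\cdot\bw}.
\]
\item $\brho v_0$ is kept only for the $\bw=0$ mode.
\end{enumerate}

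This triangular system is solvable if and only if $m_{1,1}\neq0$. For (\ref{def:bs-skew}) the skew part drops out and $m_{1,1}=\sigma$, so $\sigma>0$ suffices, as you predicted. For a general invertible $\bS$ the condition is $\bw^{\intercal}\bS\bw\neq0$, which invertibility does not guarantee.

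In the example above, $1/m_{1,1}=1$ whereas $\bv_{1,\bw}^{\intercal}\brho\bv_{1,\bw}=\tfrac12$, so $\brho$ really must be abandoned for $\bw\neq0$. For $\bS=\sigma I_3$ these formulas reduce to the paper's $\psi$ and $\phi$.
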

\begin{proof}
It is easy to see that
\[
\frac{\partial \psi} {\partial t}=0, \quad
\CU (\psi) =\sum_{\bw \neq 0} \sum_{2\le d \le 3}  \alpha_{d,\bw}\bv_{d,\bw} e^{i\bx\cdot \bw}  = J^g(\bx) - \bS \phi(\bx) 
\]
and
\[
\CU  (\phi) =0, \quad   \frac{\partial \phi} {\partial t} = 0
\]
 we have
	\begin{eqnarray*}
		\dot \EE &=& \dot U = \CU (V)-\bS U = \CU (\HH-\psi) - \bS U  = \CU (\HH) -\sigma \EE - J^g\\
		\dot \HH &=& \dot V = -\CU (U) = - \CU (\EE + \phi) = -\CU (\EE)
	\end{eqnarray*}

\end{proof}

\appendix
\section{The proves of Lemmas \ref{lem:eigen-sigma}, \ref{lem-limit-35-46}, \ref{lem-limit-35-46}, \ref{lem:y_structure}, and Propositions \ref{prop:special_sigma}, \ref{prop:application-parallel}}
\subsection{The proof of Lemma \ref{lem:eigen-sigma}}\label{app-lem:eigen-sigma}
\begin{proof}
	We follow the notations in Section \ref{sec:j=sigmmaE}. If $Z=(X^T,Y^T)^T e^{-ix\bw} \in L^{\bw}$ is an eigenvector associated to certain eigenvalue $i\lambda$ of $\BB$, then $(X^T,Y^T)^T$ is the eigenvalue associated to the eigenvalue $\lambda$ of the matrix $\BB_{\bw}$. 
 By Eq. \ref{eq:Phi_w_case}, 
	\begin{eqnarray*}
		\BB_{\bw} \begin{pmatrix}
			X^1_{\bw}  \\
			Y^1_{\bw}
		\end{pmatrix} &=&  \begin{pmatrix}
			\Phi_{\bw}  v_{1,\bw}\\
			0
		\end{pmatrix} = \begin{pmatrix}
			0  \\
			0
		\end{pmatrix} =  \lambda_{1,\bw}\begin{pmatrix}
			X^1_{\bw}  \\
			Y^1_{\bw}
		\end{pmatrix}, \\
		\BB_{\bw} \begin{pmatrix}
			X^2_{\bw}  \\
			Y^2_{\bw}
		\end{pmatrix} &=&  \begin{pmatrix}
			i\sigma   v_{1,\bw}\\
			-\Phi_{\bw}  v_{1,\bw}
		\end{pmatrix} = \begin{pmatrix}
			i\sigma   v_{1,\bw}  \\
			0
		\end{pmatrix} =  \lambda_{2,\bw}\begin{pmatrix}
			X^2_{\bw}  \\
			Y^2_{\bw}
		\end{pmatrix}, \\
		\BB_{\bw} \begin{pmatrix}
			X^3_{\bw}  \\
			Y^3_{\bw}
		\end{pmatrix} &=&  \begin{pmatrix}
			(i\sigma \lambda_{3,\bw} + |\bw|^2 )  v_{2,\bw} \\
			-i|\bw| \lambda_{3,\bw} v_{2,\bw} 
		\end{pmatrix} = \begin{pmatrix}
			(i\sigma \lambda_{3,\bw} - \lambda_{3,\bw}{\lambda}^-_{\bw} )  v_{2,\bw} \\
			-i|\bw| \lambda_{3,\bw} v_{2,\bw} 
		\end{pmatrix} =  \lambda_{3,\bw}\begin{pmatrix}
			X^3_{\bw}  \\
			Y^3_{\bw}
		\end{pmatrix}, \\
		\BB_{\bw} \begin{pmatrix}
			X^4_{\bw}  \\
			Y^4_{\bw}
		\end{pmatrix} &=&  \begin{pmatrix}
			(i\sigma \lambda_{4,\bw} + |\bw|^2 )  v_{3,\bw} \\
			i|\bw| \lambda_{4,\bw} v_{3,\bw} 
		\end{pmatrix} = \begin{pmatrix}
			(i\sigma \lambda_{4,\bw} - \lambda_{4,\bw}{\lambda}^-_{\bw} )  v_{3,\bw} \\
			i|\bw| \lambda_{4,\bw} v_{3,\bw} 
		\end{pmatrix} =  \lambda_{4,\bw}\begin{pmatrix}
			X^4_{\bw}  \\
			Y^4_{\bw}
		\end{pmatrix} \nonumber\\
		\BB_{\bw} \begin{pmatrix}
			X^5_{\bw}  \\
			Y^5_{\bw}
		\end{pmatrix} &=&  \begin{pmatrix}
			(i\sigma \lambda_{5,\bw} + |\bw|^2 )  v_{2,\bw} \\
			-i|\bw| \lambda_{5,\bw} v_{2,\bw} 
		\end{pmatrix} = \begin{pmatrix}
			(i\sigma \lambda_{5,\bw} - \lambda_{5,\bw}\lambda^+_{\bw} )  v_{2,\bw} \\
			-i|\bw| \lambda_{5,\bw} v_{3,\bw} 
		\end{pmatrix} =  \lambda_{5,\bw}\begin{pmatrix}
			X^5_{\bw}  \\
			Y^5_{\bw}
		\end{pmatrix} \\
		\BB_{\bw} \begin{pmatrix}
			X^6_{\bw}  \\
			Y^6_{\bw}
		\end{pmatrix} &=&  \begin{pmatrix}
			(i\sigma \lambda_{6,\bw} + |\bw|^2 )  v_{3,\bw} \\
			i|\bw| \lambda_{6,\bw} v_{3,\bw} 
		\end{pmatrix} = \begin{pmatrix}
			(i\sigma \lambda_{6,\bw} - \lambda_{6,\bw}\lambda^+_{\bw} )  v_{3,\bw} \\
			i|\bw| \lambda_{6,\bw} v_{3,\bw} 
		\end{pmatrix} =  \lambda_{6,\bw}\begin{pmatrix}
			X^6_{\bw}  \\
			Y^6_{\bw}
		\end{pmatrix} \nonumber
	\end{eqnarray*}
So $\lambda_{k,\bw}$ is a eigenvalue of $\BB_{\bw}$ and $Z^k_{\bw}$ is the associated  eigenvector for $1\le k \le 6$.
\end{proof}
\subsection{The proof of Lemma \ref{lem-limit-35-46}}\label{app-lem-limit-35-46}
\begin{proof}
We have
\begin{eqnarray*}
	& & P_{2,\bw} = e^{-\frac {\sigma t}2}\{
	Z^5_{\bw} (c_{3,\bw} + c_{5,\bw})\cos(\eps t) + i Z^5_{\bw} \sin(\eps t) (c_{3,\bw} - c_{5,\bw}) \nonumber\\
	&+& 2\eps c_{3,\bw} (\cos(\eps t) + i\sin(\eps t)) \begin{pmatrix}
		v_{2,\bw}\\
		0
	\end{pmatrix} \}\\
	&=& e^{-\frac {\sigma t}2 +i\bx\cdot \bw}\{
	\cos(\eps t)b_{2,\bw}  \begin{pmatrix}
		\lambda^-_{n,\bw}v_{2,\bw}\\
		v_{2,\bw}
	\end{pmatrix} + |\bw|(a_{2,\bw} + \sigma_n b_{2,\bw})\frac{\sin(\eps_{\bw} t)}{\eps_{\bw}} \begin{pmatrix}
		\lambda^-_{n,\bw}v_{2,\bw}\\
		v_{2,\bw}
	\end{pmatrix} \nonumber\\
	&+& (a_{2,\bw} - b_{2,\bw}\lambda^-_{n,\bw}) e^{\epsilon t i}\begin{pmatrix}
		v_{2,\bw}\\
		0
	\end{pmatrix} \}.
\end{eqnarray*}
Taking limit $\sigma\to 2|\bw|$, we obtain Eqs (\ref{sol_35}). Similarly, one can show Eq. (\ref{sol_46}).  
\end{proof}
\subsection{The proof of Proposition \ref{prop:special_sigma}}\label{app-prop:special_sigma}
\begin{proof} We have
	\begin{eqnarray*}
		E_{2,\bw} + H_{2,\bw} &=& (a_{2,\bw} + b_{2,\bw})v_{3,\bw}e^{i\bx\cdot \bw-|\bw|t},\\
		E_{3,\bw} - H_{3,\bw} &=& (a_{3,\bw} - b_{3,\bw})v_{3,\bw}e^{i\bx\cdot \bw-|\bw|t}.
	\end{eqnarray*}
	and
	\begin{eqnarray*}
		\CU (E_{2,\bw}) &=& -|\bw| E_{2,\bw},  \quad \CU (H_{2,\bw}) = -|\bw| H_{2,\bw},\\
		\CU (E_{3,\bw}) &=& |\bw| E_{2,\bw},  \quad \CU (H_{3,\bw}) = |\bw| H_{3,\bw}.
	\end{eqnarray*}
The case $d=1$ can be easily checked since $\CU (E_{1,\bw})= \CU (H_{1,\bw})=0$. For $d=2$, 
	\begin{eqnarray*}
		\frac{\partial E_{2,\bw}}{\partial t}&=& -|\bw|  E_{2,\bw} -  |\bw|(a_{2,\bw} + b_{2,\bw})v_{2,\bw}e^{i\bx\cdot \bw-|\bw|t}=-|\bw|  E_{2,\bw}  -|\bw|(E_{2,\bw} + H_{2,\bw}) \nonumber\\ 
		&=& -2|\bw| E_{2,\bw} + \CU H_{2,\bw} = -\sigma E_{2,\bw} + \CU (H_{2,\bw}) \\
		\frac{\partial H_{2,\bw}}{\partial t}&=& -|\bw|  H_{2,\bw} +  |\bw|(a_{2,\bw} + b_{2,\bw})v_{2,\bw}e^{i\bx\cdot \bw-|\bw|t}=-|\bw|  H_{2,\bw}  +|\bw|(E_{2,\bw} + H_{2,\bw}) \nonumber\\ 
		&=&  -\CU (H_{2,\bw}),
	\end{eqnarray*}
	Similarly for $d=3$,
	\begin{eqnarray*}
		\frac{\partial E_{3,\bw}}{\partial t}&=& -|\bw|  E_{3,\bw} -  |\bw|(a_{3,\bw} - b_{3,\bw})v_{2,\bw}e^{i\bx\cdot \bw-|\bw|t}=-|\bw|  E_{3,\bw}  -|\bw|(E_{3,\bw} - H_{3,\bw}) \nonumber\\
		&=& -2|\bw| E_{3,\bw} + \CU H_{3,\bw} = -\sigma E_{3,\bw} + \CU (H_{3,\bw}) \\
		\frac{\partial H_{3,\bw}}{\partial t}&=& -|\bw|  H_{3,\bw} -  |\bw|(a_{3,\bw} - b_{3,\bw})v_{3,\bw}e^{i\bx\cdot \bw-|\bw|t}=-|\bw|  H_{3,\bw}  -|\bw|(E_{2,\bw} - H_{2,\bw}) \nonumber\\ 
		&=&  -\CU (H_{3,\bw}).
	\end{eqnarray*}
Therefore,  $Z_{\bw}(\bx,t; 2|\bw|)$ solves Eq. (\ref{eq:D_complex}) and meets the initial condition since 
\[
\lim_{\sigma\to 2|\bw|}Z_{0,\bw; \sigma}=Z_{0,\bw; 2|\bw|}.
\]
\end{proof}

\subsection{The proof of Proposition  \ref{prop:application-parallel}} \label{app-prop-application-parallel}
\begin{proof}
	One can directly check the sufficient condition.  We prove the necessary condition for $d=2$ and same argument can be applied for other two cases. Without loss of generosity, we assume $E^R_{2}$ is not a zero vector field.  If $H^R_{2}$ is parallel to $E^R_{2}$, then $H^R_{2} =c(\bx,t) E^R_{2}$ for some real-value function $c (\bx, t)$, which implies by Eqs (\ref{E_real_psi}-\ref{H_real_psi})
	\begin{eqnarray*}
		B^R_{2} \cos(\bx\cdot \bw)  - B^I_{2} \sin(\bx\cdot \bw) &=& c (A^R_{2} \cos(\bx\cdot \bw)  - A^I_{2} \sin(\bx\cdot \bw))\\,
		-B^R_{2} \sin(\bx\cdot \bw)  - B^I_{2} \cos(\bx\cdot \bw) &=& c( (-A^R_{2} \sin(\bx\cdot \bw)  - A^I_{2} \cos(\bx\cdot \bw)) )
	\end{eqnarray*}
	which implies
	\[
	B^R_{2} = c A^R_{2}, \quad B^I_{2} = c A^I_{2},
	\]
	or equivalently,
	\begin{eqnarray}
		t|\bw|(a^R_{2,\bw} + b^R_{2,\bw}) (1+c) &=& c a^R_{2,\bw} -b^R_{2,\bw} \label{cases_1} \\
		t|\bw|(a^I_{2,\bw} + b^I_{2,\bw}) (1+c) &=& c a^I_{2,\bw}  - b^I_{2,\bw}  \label{cases_2} 
	\end{eqnarray}
	Eq. (\ref{iff_condition}) clearly holds if $c=-1$. We shall show Eq. (\ref{iff_condition}) under the assumption $c\neq -1$.
	\begin{enumerate}
		\item Assume $c a^I_{2,\bw} -b^I_{2,\bw} =0$.  Eq. (\ref{cases_2}) implies that $a^I_{2,\bw} + b^I_{2,\bw}=0$.  So we have $b^I_{2,\bw} = -a^I_{2,\bw}$ and hence $a^I_{2,\bw}=0$ by Eq. (\ref{cases_2}) and hence $b^I_{2,\bw}=0 $, which implies Eq. (\ref{iff_condition}).
		\item  Assume $c a^I_{2,\bw} -b^I_{2,\bw} \neq 0$.  We have by Eqs. (\ref{cases_1}-\ref{cases_2})
		\[
		(a^R_{2,\bw} + b^R_{2,\bw}) (c a^I_{2,\bw}  - b^I_{2,\bw}) = (a^I_{2,\bw} + b^I_{2,\bw})(c a^R_{2,\bw} -b^R_{2,\bw})
		\]
		or
		\[
		(c+1)(a^R_{2,\bw} b^I_{2,\bw} - a^I_{2,\bw} b^R_{2,\bw})=0
		\]
		which implies Eq. (\ref{iff_condition}).
	\end{enumerate}
\end{proof}

\subsection{The proof of Proposition  \ref{prop:real_solution}} \label{app-prop:real_solution}
\begin{proof}
If $\sigma=0$, the solutions of Eq (\ref{eq:Q_1}) include $0$ and following four different real numbers with $m>0$.
\begin{eqnarray*}
& & \frac{1}{\sqrt 2}\sqrt{2|\bw|^2+m^2+\sqrt{m^4 + 4|\bw|^2 m^2 \cos^2\alpha  }},\quad  - \frac{1}{\sqrt 2}\sqrt{2|\bw|^2+m^2+\sqrt{m^4 + |\bw|^2 m^2 \cos^2\alpha}} \\
& & \frac{1}{\sqrt 2}\sqrt{2|\bw|^2+m^2-\sqrt{m^4 + 4|\bw|^2 m^2 \cos^2\alpha}} ,\quad - \frac{1}{\sqrt 2}\sqrt{2|\bw|^2+m^2-\sqrt{m^4 + 4|\bw|^2 m^2 \cos^2\alpha}}.
\end{eqnarray*}
Now assume $\sigma\ne 0$ and $\lambda$ be a real solution of Eq (\ref{eq:Q_1}).  It is clear that $\lambda\neq 0$ and we would obtain by Eq (\ref{eq:Q_3})
\begin{eqnarray}
	0&=&\lambda^4 -  (3\sigma^2 + 2|\bw|^2 + m^2) \lambda^2 + |\bw|^2 (2\sigma^2 + |\bw|^2 + 2|\theta|^2) \label{A-3} \\
	0&=&3\lambda^4 - (\sigma^2 + 4|\bw|^2 + m^2)\lambda^2 + |\bw|^4   \label{A-4}
\end{eqnarray}
Applying $(\ref{A-4}) -(\ref{A-3})$, we have
\begin{equation}\label{A-5}
	(\lambda^2 + \sigma^2)(\frac{\lambda^2}{|\bw|^2}-1)= |\theta|^2
\end{equation}
Applying $(\ref{A-4})-3(\times\ref{A-3})$, we have
\begin{equation}\label{A-6}
	\frac{\lambda^2}{|\bw|^2}= \frac{3\sigma^2 + |\bw|^2 + 3 |\theta|^2}{4\sigma^2 + |\bw|^2 + m^2}
\end{equation}
Applying Eq (\ref{A-6})  to Eq  (\ref{A-5}), we have
\begin{equation}\label{A-7}
	\lambda^2 = \frac{|\theta|^2 s_3 + s_4}{3 |\theta|^2  - \tau}
\end{equation}
where 
\begin{eqnarray*}
\tau &=&\sigma^2+m^2, \quad	s_1 = \frac{3\sigma^2 + |\bw|^2 + \tau}{|\bw|^2}\\
 s_2 &=& 3\sigma^2+|\bw|^2, \quad s_3= \tau + |\bw|^2, \quad s_4 = \sigma^2 \tau, \quad s_5 = 3s_2 -3\tau-s_1s_3 
\end{eqnarray*}
By Eqs (\ref{A-6}-\ref{A-7}), 
\begin{equation}
	\frac{3 |\theta|^2 + s_2}{s_1} = \frac{s_3|\theta|^2 +s_4 }{3 |\theta|^2 -\tau}
\end{equation}
so
\begin{equation}\label{A-8}
|\theta|^2 = \frac{-s_5 + \sqrt{s_5^2 + 36(s_2\tau +s_1s_4)}}{18}
\end{equation}
applying $|\theta|^2= \frac12 m^2 \sin^2\alpha$, Eq (\ref{A-8}) implies
\[
s_5^2 + 9m^2 \ge \sqrt{s_5^2 + 36(s_2\tau +s_1s_4)}
\]
which is equivalent to
\begin{equation}
m^4 \ge 12 \sigma^4 + 4\sigma^2 |\bw|^2 + 8 m^2 \sigma^2 + 2s_1(m^2 + \sigma^2)(m^2 + 2\sigma^2) > m^4
\end{equation}
since $s_1>1$.  The contradiction shows that no real solution exists. 
\end{proof}

\subsection{The proof of Lemma \ref{lem:y_structure}} \label{app-lem:y_structure}
\begin{proof} Note that $g(\sigma)=\sigma m^2>0$ and $g(0)<0$.  We have 
\[
g'(y)=(3y -\sigma)(y-\sigma) + m^2+|\bw|^2> 0 ,  \quad y\ge \sigma
\]
and therefore $g(y)$ is increasing as $y>\sigma$, hence any positive real solution of $g(y)$ must be in the range $(0,\sigma ]$. 
\begin{enumerate}
	\item If $\sigma=\frac{3\sqrt6}{2}|\bw|$ and $m=\frac{\sqrt 2}{4}|\bw|$,  one can directly verify that $g(y)=(y-\hat y)^3$. Also,  the solution with multiplicity $3$ must meet $g''(\hat)=0$, implies that $\hat y = \frac 23 \sigma$ is the unique solution with multiplicity $3$. 
\item 
We now assume that there is a solution $\hat y$ with multiplicity $2$, i.e,  $g(y)= (y-y_1)(y-\hat y)^2$. Then $y_1\neq \hat y$ and $y_1,\hat y$ must be real.  Since $h'(\hat y)=h(\hat y)=0$,  which implies 
\begin{eqnarray}
0&=&\hat y^3 - 2\sigma \hat y^2  +\tau \hat y -\sigma |\bw|^2  \label{append:A_9}\\
0&=&\hat y^2 - \frac{4\sigma}{3} \hat y + \frac \tau 3  \label{append:A_10}
\end{eqnarray}
Applying $\hat y \times Eq. (\ref{append:A_10}) - Eq. (\ref{append:A_9})$, we obtain
\begin{equation}\label{append:A_11}
\hat y^2 -\frac{\tau}{\sigma} \hat  y + \frac{3}{2} |\bw|^2 = 0
\end{equation}
which implies
\[
\hat y = \frac{\frac{4\sigma}{3} \pm \sqrt{(\frac{4\sigma}{3})^2 - \frac{4\tau }3}}{2},
\]
and therefore
\[
4\sigma^2 -3 \tau \ge  0 
\]
Applying $Eq (\ref{append:A_10})-Eq(\ref{append:A_11})$, we have
\[
{2(4\sigma^2 -3\tau)} \hat y = (2\tau - 9|\bw|^2) \sigma
\]
If $4\sigma^2 -3 \tau =  0$, then $2\tau = 9|\bw|^2$,  which is reduced to case 1.  So we have
$4\sigma^2 -3 \tau > 0$ and
\[
 \hat y = \frac{2\tau - 9|\bw|^2}{2(4\sigma^2 -3\tau)} \sigma
\]
Therefore,  $\hat y$ is the solution with multiplicity $2$ if and only if $g(\hat y)=g'(\hat y)=0$ and $\hat y$ is defined as above.
\item For any other case,  there are three different solutions by definition.
\end{enumerate}
\end{proof}

\end{document}